\newcommand\nthalias[1]{\AddToHook{env/#1/begin}{\crefalias{lemma}{#1}}}
\crefname{section}{Section}{Sections}
\crefname{subsection}{\S}{\S\S}
\crefname{subsubsection}{\S}{\S\S}
\theoremstyle{plain}
\newtheorem{lemma}{Lemma}[section]
\newtheorem{proposition}[lemma]{Proposition}
\newtheorem{theorem}[lemma]{Theorem}
\theoremstyle{plain}
\newtheorem{theoremN}{Theorem}
\theoremstyle{plain}
\newtheorem{definition}[lemma]{Definition}
\newtheorem{remark}[lemma]{Remark}
\newtheorem{remarks}[lemma]{Remarks}
\crefname{definition}{definition}{definitions}
\crefname{ex}{example}{examples}
\crefname{exs}{example}{examples}
\crefname{remark}{remark}{remarks}
\crefname{remarks}{remark}{remarks}
\crefname{convention}{convention}{conventions}
\crefname{notation}{notation}{notations}
\crefname{table}{table}{tables}
\crefname{lemma}{lemma}{lemmas}
\crefname{proposition}{proposition}{propositions}
\crefname{propositionN}{proposition}{propositions}
\crefname{corollary}{corollary}{corollaries}
\crefname{corollaryN}{corollary}{corollaries}
\crefname{theorem}{theorem}{theorems}
\crefname{theoremN}{theorem}{theorems}
\crefname{enumi}{}{}
\crefname{assumption}{assumption}{Assumptions}
\crefname{construction}{construction}{Constructions}
\crefname{question}{question}{Questions}
\crefname{equation}{}{}
\numberwithin{equation}{section}
\theoremstyle{nonumberplain}
\newtheorem{proof}{Proof}
\newcommand\pf[1]{\newtheorem{#1}{Proof of \Cref{#1}}}
\newcommand\bR{{\mathbb R}}
\newcommand\bZ{{\mathbb Z}}
\newcommand\cA{{\mathcal A}}
\newcommand\cC{{\mathcal C}}
\newcommand\cE{{\mathcal E}}
\newcommand\cK{{\mathcal K}}
\newcommand\cS{{\mathcal S}}
\DeclareMathOperator{\id}{id}
\DeclareMathOperator{\im}{\mathrm{im}}
\newcommand{\cat}[1]{\textsc{#1}}
\newcommand{\qedhere}{\mbox{}\hfill\ensuremath{\blacksquare}}
\newcommand{\xrightarrowdbl}[2][]{%
  \xrightarrow[#1]{#2}\mathrel{\mkern-14mu}\rightarrow
}
\title{Single and multi-valued Hilbert-bundle renormings}
\author{Alexandru Chirvasitu}
\begin{document}

\date{}

\newcommand{\Addresses}{{% additional braces for segregating \footnotesize
  \bigskip
  \footnotesize

  \textsc{Department of Mathematics, University at Buffalo}
  \par\nopagebreak
  \textsc{Buffalo, NY 14260-2900, USA}  
  \par\nopagebreak
  \textit{E-mail address}: \texttt{achirvas@buffalo.edu}

  % % \medskip
  % % 
  % % \textsc{Department of Mathematics, INSTITUTION}
  % % \par\nopagebreak
  % % \textsc{ADDRESS}
  % % \par\nopagebreak
  % % \textit{E-mail address}: \texttt{??}
  % % 

}}

\maketitle

\begin{abstract}
  We prove that subhomogeneous continuous Banach bundles over compact metrizable spaces are equivalent to Hilbert bundles, while examples show that the metrizability assumption cannot be dropped completely. This complements the parallel statement for homogeneous bundles without the metrizability assumption, and generalizes the analogous result to the effect that subhomogeneous $C^*$ bundles over compact metrizable spaces admit finite-index expectations. 
\end{abstract}

\noindent {\em Key words:
  Banach bundle;
  Hilbert bundle;
  convex structure;
  lower semicontinuous;
  renorming;
  selection theorem;
  seminorm;
  subhomogeneous
  
}

\vspace{.5cm}

\noindent{MSC 2020:
  46B03; %Isomorphic theory (including renorming) of Banach spaces
  46M20; %Methods of algebraic topology in functional analysis (cohomology, sheaf and bundle theory, etc.)
  54C65; %Selections in general topology
  46C05; %Hilbert and pre-Hilbert spaces: geometry and topology (including spaces with semidefinite inner product)
  52A21; %Convexity and finite-dimensional Banach spaces (including special norms, zonoids, etc.) (aspects of convex geometry)  
  52A01; %Axiomatic and generalized convexity
  54E35; %Metric spaces, metrizability
  54B20 %Hyperspaces in general topology
  
}

%\tableofcontents

%%%%%%%%%%%%%%%%%%%%%%%%%%%%%%%%%%%%%%%%%%%%%%%%%%%%%%%%%%%%%%%%%%%%%%%%%%%%%
%%%%%%%%%%%%%%%%%%%%%%%%%%%%%%%%%%%%%%%%%%%%%%%%%%%%%%%%%%%%%%%%%%%%%%%%%%%%%
\section*{Introduction}

The present paper's Banach bundles (occasionally \emph{continuous} Banach bundles for emphasis) are those of \cite[\S 13.4]{fd_bdl-1}: continuous open surjections $\cE\xrightarrowdbl{\pi}X$ such that
\begin{itemize}[wide]
\item the \emph{fibers} $\cE_x:=\pi^{-1}(x)$, $x\in X$ come equipped with Banach norms $\|\cdot\|_x$ gluing to a continuous map $\cE\xrightarrow{\|\cdot\|}\bR_{\ge 0}$;

\item fiber-wise scalar multiplication and addition is continuous;

\item a \emph{net}'s \cite[\S 1.6]{eng_top_1989} convergence to the $x$-fiber's zero element $0_x\in \cE_x$ is characterized by
  \begin{equation*}
    p_{\lambda}
    \xrightarrow[\quad\lambda\quad]{}
    0_x
    \iff
    \left(
      \left\|p_{\lambda}\right\|
      \xrightarrow[\quad\lambda\quad]{}
      0
    \right)
    \&
    \left(
      \pi\left(p_{\lambda}\right)
      \xrightarrow[\quad\lambda\quad]{}
      x
    \right).
  \end{equation*}
\end{itemize}
In particular, the somewhat broader concept of \cite[pp.7-8]{dg_ban-bdl} whereby $\|\cdot\|$ is only \emph{upper semicontinuous} \cite[Problem 1.7.14]{eng_top_1989} will not make much of an appearance. 

The motivating impetus for the ensuing discussion is provided by a cluster of problems touched upon explicitly in \cite{MR3056657,bg_cx-exp} and somewhat obliquely in \cite{pvl_disc} revolving around the possibility of recasting a given Banach bundle as a \emph{Hilbert bundle} in the sense of \cite[Definition 13.5]{fd_bdl-1}: one whose fibers are Hilbert spaces under their norms $\|\cdot\|_x$. We elaborate below. 

A \emph{renorming} of a Banach bundle $\cE\xrightarrowdbl{}X$ (sometimes \emph{equivalent renorming} for emphasis) is a Banach-bundle norm $\vvvert\cdot\vvvert$ equivalent to the original $\|\cdot\|$ in the sense that
\begin{equation*}
  \left(\Gamma_b(\cE),\ \|\cdot\|\right)
  \xrightarrow[\quad\cong\quad]{\quad\id=\text{top-linear isomorphism}\quad}
  \left(\Gamma_b(\cE),\ \vvvert\cdot\vvvert\right),
\end{equation*}
where for any Banach bundle we write
\begin{equation*}
  \begin{aligned}
    \Gamma(\cE)
    &:=
      \left\{X\xrightarrow[\text{continuous}]{s}\cE\ :\ \pi s=\id_X\right\}
      \quad
      \left(\text{\emph{sections} of $\cE$ \cite[p.9]{dg_ban-bdl}}\right)
      \quad\text{and}\\
    \Gamma_b(\cE)
    &:=
      \left\{s\in \Gamma(\cE)\ :\ \|s\|:=\sup_x \|s(x)\|_x<\infty\right\}
      \quad
      \left(\text{bounded sections}\right).
  \end{aligned}  
\end{equation*}
As expected, \emph{Hilbert} renormings will be those for which $\left(\cE_x,\vvvert\cdot\vvvert_x\right)$ Hilbert spaces. Recall (e.g. from the discussion surrounding \cite[Proposition 2.1]{gog_top-fg}) that a Banach bundle is
\begin{itemize}[wide]
\item \emph{homogeneous} if all of its fibers $\cE_x$ have the same finite dimension;

\item and \emph{subhomogeneous} if there is a finite upper bound on its fiber dimensions (a term borrowed from the $C^*$-algebra literature \cite[Definition IV.1.4.1]{blk}):
  \begin{equation*}
    \sup_{x\in X}\dim \cE_x<\infty.
  \end{equation*}
\end{itemize}

Having settled the requisite language, some of literature background is as follows. 
\begin{itemize}[wide]
\item A slightly re-tailored \cite[Problem 4.7]{MR3056657} asks whether homogeneous Banach bundles over compact Hausdorff spaces admit Hilbert renormings. \cite[Theorem B]{2409.03531v1} answers this affirmatively with a sharp bound for ``how far apart'' the two norms (original and Hilbert) can be.

\item The former paper's \cite[Problem 4.9]{MR3056657} also asks a variant question: do subhomogeneous (unital) \emph{$C^*$ bundles} $\cA\xrightarrowdbl{\pi}X$ (i.e. \cite[p.9]{dg_ban-bdl} those whose fibers are unital $C^*$-algebras with the relevant operations continuous) over compact Hausdorff spaces admit Hilbert renormings?

  The original formulation rather asks whether there is an \emph{expectation} (i.e. \cite[Theorem II.6.10.1 and Theorem II.6.10.2]{blk} a norm-1 projection) $\Gamma(\cA)\xrightarrowdbl{E}C(X)$ of \emph{finite index} in the sense of \cite[Definition 2]{fk_fin-ind}:
  \begin{equation*}
    \left(\exists K\in \bR_{\ge 1}\right)
    \left(\forall s\in \Gamma(\cA)\right)
    \quad:\quad
    KE(s^*s)\ge s^*s.
  \end{equation*}
  That the two versions of the question are in fact equivalent follows from \cite[Proposition 5.4]{pt_brnch}.

  \cite[Theorem A]{2409.03531v1} answers the question affirmatively for \emph{metrizable} compact $X$ (recovering in particular \cite[Theorem 1.1]{pt_brnch} over metrizable base spaces), while \cite[Theorem A]{2409.17807v1} shows that in full generality the answer is negative, even if all $C^*$-algebras in sight are commutative.  

\item On the other hand, \cite[Proposition 3.4]{bg_cx-exp} proves that if the $C^*$ bundle $\cA\xrightarrowdbl{\pi}X$ is homogeneous (rather than only subhomogeneous) then Hilbert renormings do exist regardless of (the compact Hausdorff) base space. 
\end{itemize}

The following schematic summary aggregates the above items together with the obvious (in context) missing piece:

\begin{equation*}
  \begin{tikzpicture}[>=stealth,auto,baseline=(current  bounding  box.center)]
    \path[anchor=base]

    (0,0) node (cast_hom) [rectangle,minimum size=6mm,rounded corners=2mm,very thick,draw=black!50] {\begin{tabular}{c} $C^*$ \\ homogeneous $\cA\xrightarrowdbl{}X$\\\cite[Proposition 3.4]{bg_cx-exp}\end{tabular}}

    +(6,0) node (cast_subhom) [rectangle,minimum size=6mm,rounded corners=2mm,very thick,draw=black!50] {\begin{tabular}{c}$C^*$\\subhomogeneous $\cA\xrightarrowdbl{}X$\\ $X$ metrizable\\\cite[Theorem A]{2409.03531v1}\end{tabular}}

    +(0,-4) node (ban_hom) [rectangle,minimum size=6mm,rounded corners=2mm,very thick,draw=black!50] {\begin{tabular}{c}Banach \\ homogeneous $\cE\xrightarrowdbl{}X$\\\cite[Theorem B]{2409.03531v1}\end{tabular}}

    +(6,-4) node (ban_subhom) [rectangle,minimum size=6mm,rounded corners=2mm,very thick,draw=black!50] {\begin{tabular}{c}Banach \\ subhomogeneous $\cE\xrightarrowdbl{}X$\\$X$ metrizable\\?\end{tabular}}
    ;

    \draw[-,dashed] (cast_hom) to[bend left=0] node[pos=.5,auto] {$\scriptstyle $} (cast_subhom);
    \draw[-,dashed] (ban_hom) to[bend left=0] node[pos=.5,auto] {$\scriptstyle $} (ban_subhom);
    \draw[-,dashed] (cast_hom) to[bend left=0] node[pos=.5,auto] {$\scriptstyle $} (ban_hom);
    \draw[-,dashed] (cast_subhom) to[bend left=0] node[pos=.5,auto] {$\scriptstyle $} (ban_subhom);
  \end{tikzpicture}
\end{equation*}

One of the main goals of this note is to fill in the lower right-hand corner with an affirmative answer to the displayed question mark (\Cref{th:subhom.unitariz} below). The argument is very much in line with that proving \cite[Theorem A]{2409.03531v1}, ultimately relying on a version \cite[Theorem 3.4]{horv_top-cvx} of the celebrated \emph{Michael selection theorem} (\cite[Theorem 3.2'']{mich_contsel-1}, \cite[Theorem 1.1]{zbMATH06329568}, etc.).

It is precisely this selection aspect of the problem that demands metrizability, much as Michael-selection-type results typically do: see e.g. \cite[\S 5.1]{rs_cont-sel}, referencing \cite{zbMATH03569401,MR450972}, and \Cref{se:nbrnch} below for elaboration on the matter. It is perhaps worthwhile disentangling the portion of the argument relying on metrizability from the section that does not: \Cref{th:subhom.lsc.unitariz} shows that rather broadly, over \emph{locally paracompact} \cite[Defnition 3.1]{gierz_bdls} base spaces, one can effect a kind of \emph{multi-valued} renorming. 

\begin{theoremN}\label{thn:mv.renorm}
  Let $\cE\xrightarrowdbl{\pi}X$ be a subhomogeneous continuous Banach bundle over a locally paracompact space.

  There is a map
  \begin{equation*}
    X\ni x
    \xmapsto{\quad}
    \cK_x\subset \left\{\text{Hilbert norms on }\cE_x\right\}
  \end{equation*}
  with the following properties:
  \begin{itemize}[wide]
  \item Each $\cK_x$ is convex for an abstract \emph{convexity} \cite[Definition 2.1]{horv_top-cvx} on the space of seminorms on $\Gamma(\cE)$.

  \item Each $\cK_x$ is closed in the pointwise topology on seminorms.

  \item $\cK_{\bullet}$ is \emph{lower semicontinuous} \cite[\S 2]{mich_contsel-1} in $x\in X$ with respect to said pointwise topology.

  \item $\cK_{\bullet}$ is an \emph{equivalent} renorming in the sense of \Cref{def:mv.renorm}\Cref{item:def:mv.renorm:mv}:
    \begin{equation*}
      \sup_x
      \sup_{\vvvert\cdot\vvvert\in \cK(x)}
      d_{BM}\left((\cE_x,\|\cdot\|_x),\ (\cE_x,\vvvert\cdot\vvvert_x)\right)
      <\infty,
    \end{equation*}
    where $d_{BM}$ stands for \emph{Banach-Mazur distances} \cite[\S 37]{tj_bm}.  \qedhere
  \end{itemize}
\end{theoremN}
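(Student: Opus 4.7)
The plan is to set
\begin{equation*}
  \cK_x
  :=
  \left\{
    \text{Hilbert norms } \vvvert\cdot\vvvert \text{ on } \cE_x
    \ :\
    \|\cdot\|_x \le \vvvert\cdot\vvvert \le \sqrt{n}\,\|\cdot\|_x
  \right\},
  \qquad
  n:=\sup_x \dim \cE_x<\infty,
\end{equation*}
and verify the four bullets in turn. Non-emptiness of $\cK_x$ is John's ellipsoid theorem on each finite-dimensional fiber, and the sandwich inequality immediately gives the uniform bound $d_{BM}((\cE_x,\|\cdot\|_x),(\cE_x,\vvvert\cdot\vvvert))\le \sqrt n$, disposing of the last bullet. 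Convexity and closedness both fall out once one observes that the natural Horvath structure on Hilbert seminorms is $(\vvvert\cdot\vvvert_1,\vvvert\cdot\vvvert_2,\alpha)\mapsto \sqrt{\alpha\vvvert\cdot\vvvert_1^2+(1-\alpha)\vvvert\cdot\vvvert_2^2}$ (pulled back along squaring from the ordinary convex cone of inner products), that the sandwich inequality is linear in the squared norm and so survives this operation, and that both the parallelogram identity and the sandwich define closed conditions in the pointwise topology on seminorms.

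The crux is lower semicontinuity, which I expect to be the main obstacle. Given $x_0 \in X$, $\vvvert\cdot\vvvert_0 \in \cK_{x_0}$, and a basic pointwise neighborhood of $\vvvert\cdot\vvvert_0$ cut out by sections $s_1,\dots,s_k \in \Gamma(\cE)$ and tolerance $\varepsilon>0$, one seeks a neighborhood $V$ of $x_0$ such that every $y \in V$ admits some $\vvvert\cdot\vvvert_y \in \cK_y$ with $\bigl|\vvvert s_i(y)\vvvert_y - \vvvert s_i(x_0)\vvvert_0\bigr|<\varepsilon$ for all $i$. Inside a paracompact neighborhood of $x_0$, standard Banach-bundle section-extension techniques (compare \cite[\S 13]{fd_bdl-1}) supply global sections $e_1,\dots,e_d \in \Gamma(\cE)$, $d:=\dim \cE_{x_0}$, with $(e_i(x_0))_i$ orthonormal for $\vvvert\cdot\vvvert_0$. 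For $y$ near $x_0$ the $e_i(y)$ remain linearly independent (fiber dimension in a continuous Banach bundle is lower semicontinuous, by compactness of the unit sphere in $\bR^d$ and continuity of $\|\cdot\|$ on $\cE$) and span a distinguished $W_y\subseteq \cE_y$ of dimension $d$. Prescribe $\vvvert\cdot\vvvert_y|_{W_y}$ by declaring the $e_i(y)$ orthonormal there, and extend across any $W_y$-complement in $\cE_y$ via a John ellipsoid for the induced quotient $\|\cdot\|_y$-norm. Writing $s_i(y)=\sum_j c_{ij}(y)e_j(y)+r_i(y)$ with continuous $c_{ij}$ and $\|r_i(y)\|_y\to 0$ as $y\to x_0$ (by continuity of the $s_i$ in the bundle topology), the $\varepsilon$-estimate then follows from the $\vvvert\cdot\vvvert_y\le \sqrt n\,\|\cdot\|_y$ bound together with finite-dimensional continuity.

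The principal technical hurdle is preserving the global sandwich $\|\cdot\|_y\le \vvvert\cdot\vvvert_y\le \sqrt n\,\|\cdot\|_y$ when $\dim\cE_y$ exceeds $d$, since the $W_y$-plus-complement splitting used to build $\vvvert\cdot\vvvert_y$ is generally not orthogonal for $\|\cdot\|_y$ itself. This is precisely where the multi-valued formulation earns its keep: rather than commit to a canonical $\vvvert\cdot\vvvert_y$, one is free to replace the ``naive'' direct-sum extension by any John-compatible perturbation within $\cK_y$, and the $\sqrt n$ slack affords enough room to do so without spoiling the $\varepsilon$-approximation on the finitely many prescribed $s_i$. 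Local paracompactness enters only through the section-extension step; dispensing with it forces us into Michael-selection territory and hence requires the metrizability hypothesis of \Cref{th:subhom.unitariz}.
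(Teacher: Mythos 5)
Your first, second, and fourth bullets go through: John's theorem gives non-emptiness and the Banach--Mazur bound, and both the parallelogram law and the sandwich $\|\cdot\|_x\le\vvvert\cdot\vvvert\le\sqrt n\,\|\cdot\|_x$ are preserved by $\ell^2$-combinations and pointwise limits. The fatal problem is exactly where you locate the ``principal technical hurdle'': the maximal set $\cK_x$ of all sandwiched Hilbert norms is simply \emph{not} lower semicontinuous at points where the fiber dimension jumps, and no amount of ``$\sqrt n$ slack'' repairs this. Concretely, take $X=[0,1]$, $n=2$, and the bundle with $\Gamma(\cE)=\{(f,g)\in C[0,1]^2: g(0)=0\}$, fiber norms $\ell_1$; so $\cE_0=\bR$ and $\cE_t=\ell_1^2$ for $t>0$, and the norm is continuous. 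The norm $\vvvert a\vvvert_0:=|a|$ lies in your $\cK_0$, and the section $s=(1,0)$ has $\vvvert s(0)\vvvert_0=1$. But for $t>0$, \emph{every} Hilbert norm on $\ell_1^2$ with $\|\cdot\|_1\le\vvvert\cdot\vvvert\le\sqrt2\,\|\cdot\|_1$ satisfies $\vvvert(1,0)\vvvert\ge\sqrt2$. Indeed, writing the unit ellipse as $\{ax^2+2bxy+cy^2\le1\}$ after the change of coordinates $(x,y)\mapsto(x+y,x-y)$ carrying the diamond $B_1$ to $[-1,1]^2$: containment in the square forces $a\le ac-b^2$ and $c\le ac-b^2$, hence $a+c\ge2$; containing $\tfrac1{\sqrt2}B_1$ forces $a+c-2b\le2$, hence $b\ge0$; and then containing the point $(r,0)$ (image $(r,r)$) forces $(a+2b+c)r^2\le1$, i.e.\ $r\le1/\sqrt2$. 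So the pointwise-open set $\{p: p(s)<1.2\}$ meets $\cK_0$ but misses $\cK_t$ for all $t>0$. This is the same geometric phenomenon recorded in \Cref{re:why.slices}: ellipsoids adapted to a low-dimensional ``slice'' of the unit ball are incompatible with ellipsoids adapted to the ambient body, so the norms of the small limiting fiber cannot all be reached from sandwiched norms on the larger nearby fibers.

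The paper avoids this by \emph{not} taking the maximal set. It stratifies $X$ by fiber dimension and defines $\cK_x$ recursively: on the locus of locally minimal dimension, $\cK_x$ is the \emph{singleton} L\"owner norm; on higher strata, $\cK_x$ is the $\ell^2$-convex hull, over all slices $S$ of the unit ball (and, recursively, all members of $\cK_{k-1}(S)$), of the L\"owner ellipsoids of $\mathrm{co}\bigl(K\cup(S^{\perp}\cap\cE_{x,\|\cdot\|\le1})\bigr)$ as in \Cref{eq:ellips.s.gen}. These ellipsoids are generally \emph{not} sandwiched with constant $\sqrt n$ (only with some larger dimension-dependent constant), and the slice indexed by $\mathrm{span}\{e_i(y)\}$ is precisely what converges to the (unique) element of $\cK_{x_0}$ as $y\to x_0$. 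So your instinct that the splitting $W_y\oplus(\text{complement})$ is where the difficulty lies is right, but the fix is to build $\cK_y$ \emph{out of} such splittings with a relaxed Banach--Mazur constant, and to shrink $\cK_{x_0}$ on the singular stratum, rather than to hope the sandwiched set already contains what you need.
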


% % %%%%%%%%%%%%%%%%%%%%%%%%%%%%%%%%%%%%%%%%%%%%%%%%%%%%%%%%%%%%%%%%%%%%%%%%%%%%%
% % \subsection*{Acknowledgements}
% % 

% % %%%%%%%%%%%%%%%%%%%%%%%%%%%%%%%%%%%%%%%%%%%%%%%%%%%%%%%%%%%%%%%%%%%%%%%%%%%%%
% % %%%%%%%%%%%%%%%%%%%%%%%%%%%%%%%%%%%%%%%%%%%%%%%%%%%%%%%%%%%%%%%%%%%%%%%%%%%%%
% % \section{Preliminaries}\label{se:prel}
% %

%%%%%%%%%%%%%%%%%%%%%%%%%%%%%%%%%%%%%%%%%%%%%%%%%%%%%%%%%%%%%%%%%%%%%%%%%%%%%
%%%%%%%%%%%%%%%%%%%%%%%%%%%%%%%%%%%%%%%%%%%%%%%%%%%%%%%%%%%%%%%%%%%%%%%%%%%%%
\section{Hilbertable subhomogeneous bundles}\label{se:hlbrtbl}

\Cref{th:subhom.unitariz} below parallels \cite[Theorem B]{2409.03531v1}: compared to the latter, it allows for \emph{sub}homogeneous Banach bundles on the one hand, while imposing metrizability on the base space $X$ on the other.

\begin{theorem}\label{th:subhom.unitariz}
  A subhomogeneous continuous Banach bundle over a compact metrizable space admits an equivalent Hilbert-bundle renorming. 
\end{theorem}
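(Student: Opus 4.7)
The plan is to deduce \Cref{th:subhom.unitariz} from \Cref{thn:mv.renorm} by extracting a continuous single-valued selection from the multi-valued renorming it provides. Compact metrizability of $X$ entails paracompactness, which is precisely what is needed in order to invoke the Michael-type selection theorem for topological convex structures, \cite[Theorem 3.4]{horv_top-cvx}, referenced in the introduction. Applied to the multifunction $x \mapsto \cK_x$ furnished by \Cref{thn:mv.renorm} --- whose values are non-empty, closed, and convex for the abstract convexity on the space of seminorms on $\Gamma(\cE)$, and which is lower semicontinuous for the pointwise topology on that space --- this selection theorem produces a continuous single-valued section $X \ni x \mapsto \vvvert \cdot \vvvert^{(x)} \in \cK_x$. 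By construction each $\vvvert \cdot \vvvert^{(x)}$ restricts to a Hilbert norm $\vvvert \cdot \vvvert_x$ on $\cE_x$.

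The next step is to promote this pointwise-continuous family of fiber Hilbert norms into a genuine continuous Banach-bundle norm $\vvvert \cdot \vvvert : \cE \to \bR_{\ge 0}$. Continuity of the selection in the pointwise topology on seminorms unfolds as the statement that, for every fixed $s \in \Gamma(\cE)$, the real-valued function $x \mapsto \vvvert s(x) \vvvert_x$ is continuous on $X$. Combined with the fact that any point of $\cE$ is realized as $s(\pi(p))$ for some $s \in \Gamma(\cE)$ (a standard Douady--dal Soglio-H\'erault feature of continuous Banach bundles), together with the uniform two-sided estimate between $\vvvert \cdot \vvvert$ and $\|\cdot\|$ discussed below, a standard $\varepsilon$/section-approximation argument should upgrade sectional continuity in $x$ to joint continuity on the total space $\cE$, giving a bundle norm in the sense of \cite[\S 13.4]{fd_bdl-1}.

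Equivalence of the renorming with the original is immediate from the uniform Banach--Mazur control built into \Cref{thn:mv.renorm}:
\[
  \sup_{x \in X}\, d_{BM}\bigl((\cE_x,\|\cdot\|_x),\ (\cE_x,\vvvert\cdot\vvvert_x)\bigr) \;<\;\infty,
\]
which supplies a single $K \in \bR_{\ge 1}$ with $K^{-1}\|\cdot\|_x \le \vvvert \cdot \vvvert_x \le K\|\cdot\|_x$ on every fiber, and hence identifies $(\Gamma_b(\cE),\|\cdot\|)$ and $(\Gamma_b(\cE),\vvvert\cdot\vvvert)$ as topologically isomorphic Banach spaces.

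The main obstacle, flagged in the introduction, sits squarely in the single-valued selection step: metrizability of $X$ is essential because Michael-type selection theorems are known to fail over general compact Hausdorff base spaces (cf.\ \cite[\S 5.1]{rs_cont-sel}), and indeed \cite[Theorem A]{2409.17807v1} shows that the analogous $C^*$-algebraic statement fails without metrizability. The remaining pieces --- promoting the pointwise-continuous selection to total-space continuity and reading off equivalence from the Banach--Mazur bound --- are essentially formal once the selection has been secured.
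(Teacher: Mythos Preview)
Your overall architecture matches the paper's: apply \Cref{thn:mv.renorm} (equivalently \Cref{th:subhom.lsc.unitariz}) and then select continuously via \cite[Theorem 3.4]{horv_top-cvx}. The gap is in your justification for why that selection theorem applies. You write that compact metrizability of $X$ ``entails paracompactness, which is precisely what is needed'' --- but paracompactness of the \emph{domain} is not the operative hypothesis here. Every compact Hausdorff space is paracompact, so if that were all one needed, the argument would go through for arbitrary compact Hausdorff $X$; yet \Cref{pr:zn.not.hilb.renorm} furnishes compact Hausdorff bases over which no Hilbert renorming exists, even though the multi-valued renorming of \Cref{th:subhom.lsc.unitariz} is available.

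What the paper actually does with metrizability of $X$ is push it to the \emph{codomain}: metrizable $X$ forces $\Gamma(\cE)$ to be separable (via \cite[Example 19.5(iii) and Proposition 16.4]{gierz_bdls}), and separability of a Banach space $E$ makes the pointwise topology on $\cS_{\le K}(E)=\{p\in\cS(E):p\le K\|\cdot\|\}$ metrizable (by a Banach--Alaoglu-style argument, the paper also notes this set is compact). It is this compactness-plus-metrizability of the ambient seminorm space that feeds into \cite[Theorem 3.4]{horv_top-cvx}. Your proposal skips this entire step, so as written the invocation of the selection theorem is unsupported; you should identify and verify the actual hypotheses Horvath's result places on the target convexity space, and explain how metrizability of $X$ secures them.
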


We need a modified version of \Cref{th:subhom.unitariz}, weakening both the hypothesis and the conclusion.  Recall \cite[\S 2]{mich_contsel-1} that a map $X\xrightarrow{\varphi} 2^Y$ for topological spaces $X$ and $Y$ is \emph{lower semicontinuous (LSC)} if
\begin{equation*}
  \left(\forall x\in X\right)
  \left(\forall\text{ open }V\subseteq Y,\ \varphi(x)\cap V\ne\emptyset\right)
  \left(\exists\text{ nbhd }U\ni x\right)
  \left(\forall x'\in V\right)
  \quad:\quad
  \varphi(x')\cap V\ne\emptyset. 
\end{equation*}
Or (\cite[Definition 7.1.1]{kt_corresp}, \cite[Proposition 2.1]{mich_contsel-1}): $\varphi$ is continuous for the \emph{lower Vietoris topology} (\cite[\S 1.2]{ct_vietoris}, \cite[Definition 1.3.2]{kt_corresp}) on the power-set of $Y$.

To bring the aforementioned \cite[Theorem 3.2'']{mich_contsel-1} into scope, we will need weaker notions of renorming that allow for some ``slack'' in selecting the norms $\vvvert\cdot\vvvert_x$. We introduce the various terms.

\begin{definition}\label{def:mv.renorm}
  Let $\cE\xrightarrowdbl{\pi}X$ be a Banach bundle. 
  \begin{enumerate}[(1),wide]
  \item\label{item:def:mv.renorm:mv} A \emph{multi-valued (equivalent) renorming} for $\cE$ is a map
    \begin{equation}\label{eq:mv.renorm}
      X\ni x
      \xmapsto{\quad\cK\quad}
      \left(\text{set of norms on }\cE_x\right)
    \end{equation}
    with 
    \begin{equation}\label{eq:renorm.mv}
      \sup_x
      \sup_{\vvvert\cdot\vvvert\in \cK(x)}
      d_{BM}\left((\cE_x,\|\cdot\|_x),\ (\cE_x,\vvvert\cdot\vvvert_x)\right)
      <\infty,
    \end{equation}
    where $d_{BM}$ denotes the \emph{Banach-Mazur distances} \cite[\S 37]{tj_bm}
    \begin{equation*}
      d_{BM}\left((E,\|\cdot\|_E),\ (F,\|\cdot\|_F)\right)
      :=
      \sup\left\{\|T\|\cdot \|T^{-1}\|\right\}
      \quad\text{over}\quad
      (E,\|\cdot\|_E)
      \xrightarrow[\text{linear isomorphism}]{\quad T\quad}
      (F,\|\cdot\|_F).
    \end{equation*}

  \item\label{item:def:mv.renorm:lsc} Such a renorming is \emph{lower semicontinuous} if it is so as a map
    \begin{equation*}
      X
      \xrightarrow{\quad}
      2^{\cS(\Gamma_b(\cE))}
      ,\quad
      \cS(\bullet)
      :=
      \left\{\text{continuous seminorms on }\bullet\right\}
    \end{equation*}
    for the pointwise topology on $\cS(\Gamma_b(\cE))$.
  \end{enumerate}
\end{definition}

Michael selection requires closed and convex values for multi-valued mappings; closure in the context of \Cref{th:subhom.lsc.unitariz}, for sets of seminorms, will always be with respect to the pointwise topology mentioned in \Cref{def:mv.renorm}\Cref{item:def:mv.renorm:lsc} as for convexity, cf. \cite[\S 1, p.17]{zbMATH03300430}:

\begin{definition}\label{def:lp.cvx}
  For $1\le p<\infty$ the \emph{$\ell^p$ convex structure} on the space of seminorms on a vector space is
  \begin{equation}\label{eq:lp.cvx}
    \lambda\|\cdot\|\oplus(1-\lambda)\vvvert\cdot\vvvert
    :=
    \left(\lambda \|\cdot\|^p+(1-\lambda)\vvvert\cdot\vvvert^p\right)^{\frac 1p}
    ,\quad
    \lambda\in [0,1].
  \end{equation}
\end{definition}

\begin{remarks}\label{res:l2cvx}
  \begin{enumerate}[(1),wide]
  \item The binary operations \Cref{eq:lp.cvx} parametrized by $\lambda\in [0,1]$ make the space $\cS(E)$ of seminorms on $E$ into a \emph{barycentric algebra} in the sense of \cite[\S 12.7]{schecht_hndbk-an}. Upon declaring a subset of $\cS(E)$ convex precisely when it is closed under all such operations, the resulting collection $\cC(E)\subset 2^{\cS(E)}$ of convex seminorm sets forms a \emph{convexity} (or convex structure \cite[\S I.1.1]{vdv_cvx}, hence the terminology) in the sense of \cite[Definition 2.1]{horv_top-cvx}:
    \begin{itemize}[wide]
    \item the empty set, $\cS(E)$ and singletons are convex;

    \item intersections of convex sets are convex;

    \item filtered unions of convex sets are convex. 
    \end{itemize}

  \item The fact that the $\ell^2$ convexity structure of \Cref{def:lp.cvx} induces one on the space of \emph{Hilbert} seminorms (i.e. those induced by inner products) will be implicit in much of the sequel.
  \end{enumerate}
\end{remarks}

With a view towards \Cref{th:subhom.lsc.unitariz} below, recall \cite[Defnition 3.1]{gierz_bdls} that \emph{locally paracompact} (here always assumed Hausdorff) spaces are those whose points all admit closed \emph{paracompact} \cite[\S 5.1]{eng_top_1989} neighborhoods. \cite[Theorem 3.2]{gierz_bdls} ensures that Banach bundles $\cE\xrightarrowdbl{}X$ over locally paracompact spaces are always \emph{full} in the sense of \cite[Definition 2.1]{gierz_bdls}:
\begin{equation*}
  \bigcup_{s\in \Gamma_b(\cE)}\im s = \cE. 
\end{equation*}
Or: every element of the total space $\cE$ is the image of some global bounded section. 

\begin{theorem}\label{th:subhom.lsc.unitariz}
  A subhomogeneous continuous Banach bundle over a locally paracompact space admits an equivalent lower semicontinuous multi-valued Hilbert renorming with non-empty, closed, $\ell^2$-convex values.
\end{theorem}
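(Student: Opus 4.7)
The plan is to define, for each $x \in X$,
\[
  \cK_x
  :=
  \left\{
    \vvvert\cdot\vvvert \text{ Hilbert on }\cE_x
    \ :\
    \exists\lambda>0,\
    \lambda\|v\|_x \le \vvvert v\vvvert \le K\lambda\|v\|_x
    \text{ for all }v\in\cE_x
  \right\},
\]
with $K$ a constant depending only on $N := \sup_x \dim\cE_x < \infty$ (concretely $K=3N$ suffices). Each $\cK_x$ is non-empty by Fritz John's ellipsoid theorem, which produces a Hilbert norm at ratio $\le\sqrt{N}\le K$ to $\|\cdot\|_x$. The $\ell^2$-convexity is immediate: if $\vvvert\cdot\vvvert_i\in\cK_x$ witnesses scale $\lambda_i$, the $\ell^2$-average satisfies the same ratio bound with scale $\lambda$ given by $\lambda^2=\mu\lambda_1^2+(1-\mu)\lambda_2^2$. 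Pointwise closedness is routine, since the parallelogram identity and the ratio bound (via inf/sup over the compact unit sphere of $\|\cdot\|_x$) both survive pointwise convergence. The equivalent-renorming condition of \Cref{def:mv.renorm}\Cref{item:def:mv.renorm:mv} holds with Banach--Mazur distance uniformly bounded by $K$, realized by $\lambda\cdot\id$.

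The crux is lower semicontinuity. Fix $x_0\in X$, $\vvvert\cdot\vvvert_0\in\cK_{x_0}$, and a basic open neighborhood in $\cS(\Gamma_b(\cE))$ specified by sections $s_1,\dots,s_m\in\Gamma_b(\cE)$ and tolerance $\epsilon>0$. Local paracompactness and \cite[Theorem~3.2]{gierz_bdls} yield fullness, so we may pick sections $e_1,\dots,e_{n_0}\in\Gamma_b(\cE)$ with $e_i(x_0)$ a basis of $\cE_{x_0}$ (where $n_0=\dim\cE_{x_0}\le N$). On a neighborhood of $x_0$ the vectors $e_i(x')$ remain linearly independent and span $W_{x'}\subseteq\cE_{x'}$; the transfer isomorphism $T_{x'}\colon e_i(x_0)\mapsto e_i(x')$ pulls $\vvvert\cdot\vvvert_0$ back to a Hilbert norm $\vvvert\cdot\vvvert^W$ on $W_{x'}$. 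To extend to all of $\cE_{x'}$, fix a John Hilbert norm $J_{x'}$ on $(\cE_{x'},\|\cdot\|_{x'})$, form the $J_{x'}$-orthogonal decomposition $\cE_{x'}=W_{x'}\oplus W_{x'}^{\perp_J}$ with projection $P_W$, and set
\[
  \vvvert v\vvvert'^2
  :=
  \vvvert P_W v\vvvert^{W,2} + J_{x'}\bigl((I-P_W)v\bigr)^2.
\]
Combining $\|\cdot\|_{x'}\le J_{x'}\le\sqrt N\|\cdot\|_{x'}$ with the ratio bound on $\vvvert\cdot\vvvert^W$ and the projection estimates $\|P_W v\|_{x'},\|(I-P_W)v\|_{x'}\le\sqrt N\|v\|_{x'}$ places $\vvvert\cdot\vvvert'$ in $\cK_{x'}$ once $K$ is taken large enough in terms of $N$. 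For the approximation, expand $s_i(x_0)=\sum_j\alpha_{ij}^0 e_j(x_0)$ in the chosen basis; bundle continuity gives $\bigl\|s_i(x')-\sum_j\alpha_{ij}^0 e_j(x')\bigr\|_{x'}\to 0$, so both the $W_{x'}$- and $W_{x'}^{\perp_J}$-components of this remainder shrink, and $\vvvert s_i(x')\vvvert'\to\vvvert s_i(x_0)\vvvert_0$.

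The main obstacle is calibrating the constants: the lower bound on $\vvvert v\vvvert'$ costs a factor of $\sqrt 2$ via the Banach triangle inequality comparing $\|v\|_{x'}$ with $\|P_W v\|_{x'}+\|(I-P_W)v\|_{x'}$, which forces $K$ to scale like $N$ rather than $\sqrt N$—acceptable since only boundedness is required. A subtler conceptual obstacle is that the John norm $J_{x'}$ and the subspace $W_{x'}$ need not vary continuously in $x'$ (indeed typically cannot, as $\dim\cE_{x'}$ may jump above $\dim\cE_{x_0}$ on neighborhoods of $x_0$); this is precisely why the output is \emph{multi-valued}, with no coherence of fiberwise choices demanded until Michael-type selection enters in the metrizable setting of \Cref{th:subhom.unitariz}.
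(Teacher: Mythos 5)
Your construction is not the paper's, and the key step — lower semicontinuity — has a genuine gap that is structural rather than a matter of constants. Your extension $\vvvert\cdot\vvvert'$ of the transported norm $\vvvert\cdot\vvvert^W$ satisfies, by your own accounting, only a ratio bound of the form $c^{-1}\lambda\|\cdot\|_{x'}\le\vvvert\cdot\vvvert'\le cK\lambda\|\cdot\|_{x'}$ with $c=c(N)>1$ (the $\sqrt2$ from the triangle inequality, the $\sqrt N$ from the projection estimates, the $(1+\delta)$ from the transfer $T_{x'}$). So $\vvvert\cdot\vvvert'$ certifies membership only in ``$\cK_{x'}$ with constant $c^2K$,'' not in $\cK_{x'}$ itself. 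Enlarging $K$ cannot repair this, because the same $K$ defines the set $\cK_{x_0}$ of targets: LSC demands that \emph{every} $\vvvert\cdot\vvvert_0\in\cK_{x_0}$ be approximable, including those whose ratio to $\|\cdot\|_{x_0}$ equals $K$ exactly, and for those your approximant leaves $\cK_{x'}$ no matter how large $K$ is taken. Worse, the map $x\mapsto\{\text{all Hilbert norms within ratio }K\text{ of }\|\cdot\|_x\}$ provably fails to be LSC at dimension drops. Take $X=[0,1]$, $\cE_t=\bR^3$ with $\|(a,b,c)\|_t=(a^2+b^2)^{1/2}+t|c|$ for $t>0$, and $\cE_0=\bR^2$ Euclidean. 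The norm $\vvvert(a,b)\vvvert_0=(a^2+K^2b^2)^{1/2}$ lies in $\cK_0$ (scale $\lambda=1$, ratio exactly $K$). Test against the constant sections $s_1=(1,0,0)$, $s_2=(0,1,0)$ with tolerance $\epsilon$: any $h\in\cK_t$ with $h(s_1(t))\le1+\epsilon$ and $h(s_2(t))\ge K-\epsilon$ must have scale $\mu\in[1-\epsilon/K,1+\epsilon]$; the lower ratio bound along the line $(1,0,c)$, $c\in\bR$, then forces (via the discriminant of the resulting quadratic in $|c|$) $h(0,0,1)^2\ge\mu^4t^2/(4\epsilon)$, while the upper ratio bound at $(0,0,1)$ gives $h(0,0,1)\le K\mu t$. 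For $\epsilon<(256K^2)^{-1}$ these are incompatible, so $\cK_t$ misses the prescribed neighborhood of $\vvvert\cdot\vvvert_0$ for every $t>0$. (A smaller defect: with the quantifier ``$\exists\lambda>0$'' your $\cK_x$ is a punctured cone, so its pointwise closure contains the zero seminorm and the set is not closed; you would need to normalize, say $\lambda=1$.)

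This failure is exactly what drives the paper's more elaborate construction. There, $X$ is stratified by local fiber dimension; at locally-minimal-dimension points $\cK_x$ is the \emph{singleton} Löwner ellipsoid, and at each successive stratum $\cK_x$ is the $\ell^2$-convex hull of the Löwner ellipsoids of $\mathrm{co}\bigl(K\cup(S^{\perp}\cap\cE_{x,\|\cdot\|\le1})\bigr)$ over all slices $S$ and all $K$ in the previous stratum's value on $S$. In other words, the sets at special points are fattened \emph{a posteriori} so as to contain precisely the (constant-degraded) extensions that the LSC verification requires, and the degradation compounds only boundedly many times thanks to subhomogeneity. Your uniform, fiberwise-intrinsic definition cannot absorb that degradation, which is the heart of the problem.
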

\begin{proof}
  To fix ideas and language, we focus on the real version of the result.

  Associating to a norm on a finite-dimensional real vector space $E$ (such as the fibers $\cE_x$) its unit ball produces a bijection \cite[Proposition 7.5]{trev_tvs} between norms and origin-symmetric \emph{convex bodies} (compact convex subsets of $E$ with non-empty interior). In that picture, the Hilbert norms on $E$ correspond precisely to \emph{ellipsoids} \cite[p.8]{gard_tom_2e_2006}: affine images of the standard unit ball or, more intrinsically, origin-symmetric convex bodies whose symmetry groups are maximal compact in $GL(E)$.

  We remind the reader that among all Hilbert norms on a finite-dimensional $E$ dominated by a given norm $\|\cdot\|$ there is one, $\vvvert\cdot\vvvert^L$, ``optimal'' in the sense that its associated (\emph{L\"owner} \cite[Theorem 9.2.1]{gard_tom_2e_2006}) ellipsoid $K^L\left(E_{\|\cdot\|\le 1}\right)$ has minimal volume among those containing the unit ball $E_{\|\cdot\|\le 1}$.

  We define the convex sets $\cK_{\bullet}$ of \Cref{eq:mv.renorm} recursively, proceeding upwards with respect to fiber dimensions. Throughout the argument, we conflate origin-symmetric convex bodies $K$ with their attached norms $\|\cdot\|_{K}$ \cite[\S 1.7.2, p.53]{schn_cvx_2e_2014} (and hence ellipsoids $K$ with Hilbert norms). The reader should furthermore feel free to ignore zero-dimensional fibers: those will play no role in the desired renorming in any case. Alternatively, one can always form the direct sum between the original $\cE$ and a rank-1 trivial bundle, so as to assume there are no 0-dimensional fibers to begin with.

  \begin{enumerate}[(I),wide]
  \item\label{item:th:subhomog.unitarzbl:gen} Consider first the closed locus $X_{0}\subseteq X$ where the fiber $\cE_x$ is locally minimal(-dimensional): 
    \begin{equation*}
      \dim\cE_{x'} \ge \dim\cE_x
      ,\quad\forall x'\in\text{ some neighborhood }U\ni x.
    \end{equation*}
    Over $X_0$, $\cK_x$ is defined as the singleton
    \begin{equation}\label{eq:step.0}
      \cK_0(\cE_{x,\|\cdot\|\le 1}):=\left\{K^L(\cE_{x,\|\cdot\|\le 1})\right\}
    \end{equation}
    consisting of the L\"owner ellipsoid of the original norm's unit ball. 

  \item In the next step of the recursion, define $X_1\subseteq X$ as the set of points $x\in X$ with
    \begin{equation*}
      \left(\exists\text{ nbhd }U\ni x\right)
      \left(\forall x'\in U\ :\ x'\in X_0\text{ or }\dim\cE_{x'}\ge \dim \cE_x\right).
    \end{equation*}
    
    For $x\in X_1\setminus X_0$, define $\cK_x$ as follows.
    \begin{itemize}[wide]
    \item For each {\it slice} 
      \begin{equation}\label{eq:slice}
        S=\cE_{x,\|\cdot\|\le 1}\bigcap \left(\text{non-zero linear subspace of $\cE_x$ of arbitrary dimension}\right)
      \end{equation}
      of the unit ball $\cE_{x,\|\cdot\|\le 1}\subset \cE_x$ consider the orthogonal complement $S^{\perp}\le \cE_x$ with respect to the inner product (induced by) the L\"owner ellipsoid $K^L(\cE_{x,\|\cdot\|\le 1})$.

    \item Form the L\"owner ellipsoid
      \begin{equation}\label{eq:ellips.s}
        K^L\left(\mathrm{co}\left(K^L(S)\cup \left(S^{\perp}\cap\cE_{x,\|\cdot\|\le 1}\right)\right)\right)
        ,\quad
        \mathrm{co}:=\text{convex hull}.
      \end{equation}
      
    \item Finally, set
      \begin{equation}\label{eq:kx.1}
        \cK_x
        :=
        \mathrm{co}_{\ell^2}\left(\bigcup_{\text{slices }S}\text{ellipsoid \Cref{eq:ellips.s}}\right)
      \end{equation}
    \end{itemize}
    with $\mathrm{co}_{\ell^2}$ denoting the hull for the $\ell^2$ convex structure of \Cref{def:lp.cvx}. The construction depending only on the unit ball $\cE_{x,\|\cdot\|\le 1}$ of the Banach space $\cE_x$, we also write (by analogy to \Cref{eq:step.0})
    \begin{equation}\label{eq:step.1}
      \cK_1(\cE_{x,\|\cdot\|\le 1})
      :=
      \cK_x\text{ as just defined}.
    \end{equation}
    I claim that the map
    \begin{equation*}
      X_1
      \ni x
      \xmapsto{\quad}
      \cK_x
    \end{equation*}
    defined thus far (on the closed subspace $X_1\subseteq X$) meets the requisite constraints.

    \begin{enumerate}[label={},wide]
    \item {\bf $\cK_x$ is LSC in $x\in X_1$.} The restrictions $(\cK_{x})|_{x\in X_0}$ and $(\cK_{x})|_{x\in X_1\setminus X_0}$ are even continuous as a consequence of \cite[Proposition 1.14]{2409.03531v1}, so fix a net
      \begin{equation*}
        X_1\setminus X_0
        \ni
        x_{\lambda}
        \xrightarrow[\lambda]{\quad\text{convergent}\quad}
        x\in X_0.
      \end{equation*}
      $\cK_x$ is in this case simply the singleton $\{K:=K^L(\cE_{x,\|\cdot\|\le 1})\}$, so it will suffice to prove that there are members $K_{\lambda}\in \cK_{x_{\lambda}}$ with
      \begin{equation*}
        \|s\|_{K_\lambda}
        \xrightarrow[\quad\lambda\quad]{}
        \|s\|_{K}
        ,\quad
        \forall \text{ local section $s$ around $x$}. 
      \end{equation*}
      To verify this:
      \begin{itemize}[wide]
      \item Extend a basis for $\cE_x$ to linearly independent local sections $(s_i)_i$ around $x$ (possible by local paracompactness: \cite[Theorem 3.2]{gierz_bdls} again).
        
      \item Those sections are linearly independent locally around $x$ by continuity \cite[Remarque preceding \S 2]{dd}.
      \item Finally, take for the desired $K_{\lambda}$ the L\"owner ellipsoids
      \begin{equation*}
        K_{\lambda}
        :=
        \text{\Cref{eq:ellips.s}}
        \in \cK_{x_{\lambda}}
        ,\quad
        S:=\mathrm{span}\left\{s_i(x_{\lambda})\right\}_i
      \end{equation*}
      (with $x_{\lambda}$ in place of $x$). 
      \end{itemize}
      
    \item {\bf Uniform bound on Banach-Mazur distances.} Over $X_0$ this is an immediate consequence of John's celebrated result (\cite[(2.1.4)]{MR1243006}, \cite[Proposition 9.12]{tj_bm}) that for finite-dimensional Banach spaces $(E,\|\cdot\|)$ we have
      \begin{equation*}
        d_{BM}\left(E_{\|\cdot\|\le 1},\ K^L\left(E_{\|\cdot\|\le 1}\right)\right)
        \le
        \sqrt{\dim E}.
      \end{equation*}
      As to
      \begin{equation*}
        \sup_{K\in\text{\Cref{eq:ellips.s}}} d_{BM}\left(\cE_{x,\|\cdot\|\le 1}, K\right)<\infty,
      \end{equation*}
      observe that for any finite-dimensional Banach space $(E,\|\cdot\|)$ there is a finite upper bound on
      \begin{equation*}
        C_S:=\inf\left\{C>0\ :\ C\cdot \mathrm{co}\left(S\cup \left( S^{\perp}\cap E_{\|\cdot\|\le 1}\right)\right)\supseteq E_{\|\cdot\|\le 1}\right\}
      \end{equation*}
      uniform over slices \Cref{eq:slice} and depending only on $\dim E$. Sprinkling L\"owner ellipsoids as in \Cref{eq:ellips.s} does not alter this uniform boundedness by John's theorem again. 
    \end{enumerate}
    
  \item To continue the recursive construction, assume $X_{\bullet}$ defined for $\bullet<k$, along with the restriction of the map \Cref{eq:mv.renorm} to that subspace of $X$:
    \begin{equation*}
      x\in X_{\bullet}\setminus X_{\bullet-1}
      \xRightarrow{\quad}
      \cK_x = \cK_{\bullet}(\cE_{x,\|\cdot\|\le 1})
    \end{equation*}
    for some $\cK_{\bullet}$ analogous to the $\cK_{0,1}$ of \Cref{eq:step.0} and \Cref{eq:step.1}. As perhaps expected, we next set 
    \begin{equation*}
      X_k:=
      \left\{x\in X\ |\ \left(\exists\text{ nbhd }U\ni x\right) \left(\forall x'\in U\ :\ x'\in X_{k-1}\text{ or }\dim\cE_{x'}\ge \dim \cE_x\right) \right\}.
    \end{equation*}
    To define
    \begin{equation*}
      \cK_x = \cK_k(\cE_{x,\|\cdot\|\le 1})
      \quad\text{for}\quad
      x\in X_{k}\setminus X_{k-1}
    \end{equation*}
    again extrapolate:
    \begin{itemize}[wide]
    \item For a slice \Cref{eq:slice} and each ellipsoid
      \begin{equation*}
        K\in \cK_{k-1}(S)\subset\left\{\text{origin-centered full-dimensional ellipsoids in }\mathrm{span}~S\right\},
      \end{equation*}
      form
      \begin{equation}\label{eq:ellips.s.gen}
        K^L\left(\mathrm{co}\left(K\cup \left(S^{\perp}\cap\cE_{x,\|\cdot\|\le 1}\right)\right)\right)
      \end{equation}
      with $S^{\perp}$ as before, the orthogonal complement with respect to $K^L(\cE_{x,\|\cdot\|\le 1})$.

    \item Then, mimicking \Cref{eq:kx.1}:
      \begin{equation*}
        \cK_x
        :=
        \cK_{k}(\cE_{x,\|\cdot\|\le 1})
        :=
        \mathrm{co}_{\ell^2}\left(\bigcup_{\substack{\text{slices }S\\K\in \cK_{k-1}(S)}}\text{ellipsoid \Cref{eq:ellips.s.gen}}\right).
      \end{equation*}
    \end{itemize}
    
    % % \begin{itemize}[wide]
    % % \item For each $x\in X$ and every subspace $L\le \cE_x$ consider the L\"owner ellipsoid $K^L(\cE_{x,\|\|\cdot\le 1}\cap L)$ in $L$.
    % %   
    % % \item Form the L\"owner ellipsoid 
    % % \end{itemize}
    % % 
    % % 
    % % \begin{equation*}
    % %   \cK(x)
    % %   :=
    % %   \overline{\mathrm{co}_{\ell^2}}
    % %   \left\{
    % %     K^L(\cE_{x,\|\cdot\|\le 1}\cap L)\ :\ \text{subspaces }L\le \cE_x
    % %   \right\}
    % % \end{equation*}
    % % where
    % % \begin{itemize}[wide]
    % % \item $\overline{\mathrm{co}_{\ell^2}}$ denotes the closed convex hull with respect to the $\ell^2$ convexity structure;
    % %   
    % % \item we identify origin-symmetric convex bodies with their corresponding norms;
    % %   
    % % \item and $K^L(\cE_{x,\|\cdot\|\le 1}\cap L)$ is (the norm associated to) the L\"owner ellipsoid of the unit ball of 
    % % \end{itemize}
    % % 
    
    The procedure eventually terminates by subhomogeneity, whereupon it will be no more difficult to prove than over $X_1$ that the resulting map \Cref{eq:mv.renorm} is LSC and exhibits the requisite Banach-Mazur boundedness.
  \end{enumerate}
\end{proof}

\begin{remark}\label{re:why.slices}
  The reason for including slices in the proof of \Cref{th:subhom.lsc.unitariz} is the fact that in general, the L\"owner ellipsoid of a slice is not contained in that of the ambient body.

  In $\bR^3$, say, the L\"owner ellipsoid of a cube is a standard ball. A rectangular slice through the diagonals of a pair of parallel faces, however, will have for its L\"owner ellipsoid an ellipse of {\it eccentricity} \cite[\S 6.3]{cg_geom-rev} $\frac 1{\sqrt 2}$; obviously, that ellipse is not contained in the ball circumscribed around the cube.
\end{remark}

\pf{th:subhom.unitariz}
\begin{th:subhom.unitariz}
  Consider a continuous subhomogeneous Banach bundle $\cE\xrightarrowdbl{\pi}X$ over a compact metrizable base, with fiber norms $\|\cdot\|_x$, $x\in X$. The claim amounts to proving the existence of a map
  \begin{equation*}
    X\ni x
    \xmapsto{\quad\vvvert\cdot\vvvert_{\bullet}\quad}
    \left(
      \text{Hilbert norm }
      \vvvert\cdot\vvvert_x
      \text{ on }
      \cE_x
    \right)
    \in
    \cS(\Gamma(\cE))
    :=
    \left\{\text{seminorms on }\Gamma(\cE)\right\}
  \end{equation*}
  continuous for the pointwise topology on the space of seminorms and with
  \begin{equation}\label{eq:renorm}
    \sup_{x\in X}
    d_{BM}\left(\|\cdot\|_x,\ \vvvert\cdot\vvvert_x\right)
    <\infty.
  \end{equation}
  \Cref{th:subhom.lsc.unitariz} provides a \emph{multi}-valued equivalent Hilbert renorming
  \begin{equation*}
    X
    \xrightarrow{\quad\cK\quad}
    2^{\cS(\Gamma(\cE))}
  \end{equation*}
  with weak$^*$-closed $\ell^2$-convex images. Because $X$ is metrizable \cite[Example 19.5(iii)]{gierz_bdls} shows that $\Gamma(\cE)$  is a separable Banach space (via \cite[Proposition 16.4]{gierz_bdls}, which ensures the example's hypothesis that the total space $\cE$ be Hausdorff).

  Observe next that the weak$^*$ topology on the space
  \begin{equation*}
    \cS_{\le K}(E)
    :=
    \left\{\text{seminorms }p\text{ on a Banach space $(E,\|\cdot\|)$}\ :\ p\le K\|\cdot\|\right\}
  \end{equation*}
  is metrizable when $E$ is separable: this is entirely analogous to and no more difficult to prove than the weak$^*$ metrizability \cite[Theorem V.5.1]{conw_fa} of the unit ball of $E$ under the same separability hypothesis. Indeed, the main ingredient there is the weak$^*$ compactness of the unit ball of a dual Banach space $E^*$ (\emph{Banach-Alaoglu}: \cite[Theorem V.3.1]{conw_fa}). To verify the parallel claim that
  \begin{equation*}
    \cS_{\le K}(E)
    \text{ is compact in the pointwise topology}
    ,\quad
    \forall\text{ Banach space }E,
  \end{equation*}
  simply observe that a net $(p_{\lambda})_{\lambda}$ in $\cS_{\le K}(E)$ has $\liminf_{\lambda} p_{\lambda}$ as a \emph{cluster point}; compactness follows \cite[Theorem 3.1.23]{eng_top_1989}. 
  
  Having concluded that the space $\cS_{\le K}\left(\Gamma(\cE)\right)$ is both compact and metrizable under our hypotheses, \cite[Theorem 3.4]{horv_top-cvx} applies: there is a continuous selection
  \begin{equation*}
    X\ni x
    \xmapsto{\quad}
    \vvvert\cdot\vvvert_x\in \cK_x.
  \end{equation*}
  The equivalence constraint \Cref{eq:renorm.mv} entails \Cref{eq:renorm}, and we are done. 
\end{th:subhom.unitariz}

%%%%%%%%%%%%%%%%%%%%%%%%%%%%%%%%%%%%%%%%%%%%%%%%%%%%%%%%%%%%%%%%%%%%%%%%%%%%%
%%%%%%%%%%%%%%%%%%%%%%%%%%%%%%%%%%%%%%%%%%%%%%%%%%%%%%%%%%%%%%%%%%%%%%%%%%%%%
\section{Complements on universal $n$-branching}\label{se:nbrnch}

The current section revolves around possible modes of failure for \Cref{th:subhom.unitariz} in the absence of metrizability. A first remark is that the counterexamples provided by \cite[Theorem A]{2409.17807v1} to the existence of finite-index expectations function for the same purpose here. We recall some of the background. 

For a set $X$ and $n\in \bZ_{>0}$ write
\begin{equation*}
  X_{[n]}
  :=
  \left\{A\in 2^X\ :\ 1\le |A|\le n\right\}
\end{equation*}
for its collection of non-empty subsets of cardinality $\le n$. This is what \cite[p.608]{zbMATH03569401} would denote by $X(n)$. Given a (typically Hausdorff) topology on $X$, the set $X_{[n]}$ acquires the \emph{Vietoris topology} of \cite[\S 1.2]{ct_vietoris}; it is also the quotient topology induced by the surjection
\begin{equation*}
  X^n
  \ni
  \left(x_i\right)_{i=1}^n  
  \xrightarrowdbl{\quad}
  \left\{x_i\right\}_{i=1}^n
  \in
  X_{[n]}.
\end{equation*}

To elucidate the section title's reference to universality, recall Pavlov and Troitskii's notion of \emph{branched cover} \cite[\S 1, p.338]{pt_brnch}: a continuous open surjection of compact Hausdorff spaces with a finite upper bound on the cardinalities of the fibers. We refer to such a gadget $Y\xrightarrowdbl{\pi}X$ as \emph{$(\le n)$-branched} (\emph{$n$-branched}) if
\begin{equation*}
  \max_{x\in X}\left|\pi^{-1}(x)\right|\le n
  \quad
  \left(\text{respectively }=n\right)
  \quad
  \left(\text{some $n\in \bZ_{>0}$}\right).
\end{equation*}

Writing (as on \cite[\S 1, p.4]{2409.17807v1})
\begin{equation*}
  Y_{[n]}^{\subseteq}
  :=
  \left\{(y,A)\in Y\times Y_{[n]}\ :\ y\in A\right\},
\end{equation*}
the map
\begin{equation*}
  Y_{[n]}^{\subseteq}
  \xrightarrowdbl[\quad\text{second projection}\quad]{\quad\pi_Y\quad}
  Y_{[n]}
\end{equation*}
is an $n$-branched cover. Collectively (for varying compact Hausdorff $Y$), the $\pi_Y$ form a family roughly speaking ``as ill-behaved as $(\le n)$-branched covers can be'': the aforementioned universality. The following simple remark formalizes the as-yet-vague principle. 

\begin{lemma}\label{le:nbrnch.univ}
  Every $(\le n)$-branched cover $Y\xrightarrowdbl{\pi}X$ fits into a \emph{pullback} \cite[Definition 11.8]{ahs}
  \begin{equation*}
    \begin{tikzpicture}[>=stealth,auto,baseline=(current  bounding  box.center)]
      \path[anchor=base] 
      (0,0) node (l) {$Y$}
      +(2,.5) node (u) {$Y_{[n]}^{\subseteq}$}
      +(2,-.5) node (d) {$X$}
      +(4,0) node (r) {$Y_{[n]}$}
      ;
      \draw[->] (l) to[bend left=6] node[pos=.5,auto] {$\scriptstyle $} (u);
      \draw[->] (u) to[bend left=6] node[pos=.5,auto] {$\scriptstyle \pi_Y$} (r);
      \draw[->] (l) to[bend right=6] node[pos=.5,auto,swap] {$\scriptstyle \pi$} (d);
      \draw[->] (d) to[bend right=6] node[pos=.5,auto,swap] {$\scriptstyle \iota_{\pi}$} (r);
    \end{tikzpicture}
  \end{equation*}
  in the category of compact Hausdorff spaces for the embedding
  \begin{equation*}
    X\ni x
    \xmapsto{\quad\iota_{\pi}\quad}
    \pi^{-1}(x)
    \in
    Y_{[n]}
  \end{equation*}
\end{lemma}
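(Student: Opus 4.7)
The plan is to split the claim into two halves: that $\iota_{\pi}$ is a topological embedding of $X$ into $Y_{[n]}$, and that the resulting square is a pullback in compact Hausdorff spaces. Well-definedness of $\iota_{\pi}$ is immediate: surjectivity of $\pi$ gives $|\pi^{-1}(x)|\ge 1$, and the $(\le n)$-branched hypothesis gives $|\pi^{-1}(x)|\le n$; injectivity is equally immediate, since distinct fibers of a surjection are non-empty and disjoint.

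The main step is continuity of $\iota_{\pi}$ with respect to the Vietoris topology, which I would verify on the two standard families of subbasic opens. The preimage under $\iota_{\pi}$ of the \emph{lower}-Vietoris open $\{A:A\cap U\ne\emptyset\}$ (for $U\subseteq Y$ open) is $\pi(U)$, open by the hypothesis that $\pi$ is an open map; and the preimage of the \emph{upper}-Vietoris open $\{A:A\subseteq U\}$ is $X\setminus\pi(Y\setminus U)$, open because $\pi$ is automatically a closed map, being a continuous map between compact Hausdorff spaces. This is the step I expect to be the main obstacle: both the openness hypothesis on $\pi$ and the ambient compact Hausdorff setting are genuinely needed, one for each of the two Vietoris conditions.

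With continuity of $\iota_{\pi}$ in hand, compactness of $X$ together with Hausdorffness of $Y_{[n]}$ (standard for the Vietoris topology on finite subsets of a Hausdorff space) promotes the continuous injection to an embedding for free. For the pullback claim I would simply unwind definitions: the fiber product $X\times_{Y_{[n]}}Y_{[n]}^{\subseteq}$ is $\{(x,(y,A)):A=\pi^{-1}(x),\ y\in A\}$, which is canonically homeomorphic to the graph of $\pi$ and hence to $Y$ itself via $(x,(y,A))\mapsto y$; under this identification the two projections become $\pi$ and the map $y\mapsto(y,\pi^{-1}(\pi(y)))$, matching the diagram as stated.
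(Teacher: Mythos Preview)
Your proof is correct and follows the same route as the paper's: the paper also isolates Vietoris-continuity of $\iota_{\pi}$ as the only substantive point, attributing lower-Vietoris continuity to openness of $\pi$ and upper-Vietoris continuity to continuity of $\pi$ (your formulation via closedness, invoking the compact Hausdorff ambient, is in fact the sharper statement). The embedding promotion and the pullback verification are left implicit in the paper, so your more explicit treatment of those points is an elaboration rather than a different approach.
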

\begin{proof}
  There is very little to check; the main point (itself an unwinding of the definitions) is perhaps that the continuity and openness of $\pi$ entail the continuity of $\iota_{\pi}$ in the \emph{upper} and \emph{lower Vietoris topologies} of \cite[\S 1.2]{ct_vietoris} respectively, so that $\iota_{\pi}$ is indeed Vietoris-continuous. 
\end{proof}

%% REPRESENTABLE FUNCTOR: START

The incipient category-theoretic picture suggested by \Cref{le:nbrnch.univ} can be expanded. Some terminology will aid that goal. %The ensuing discussion relies on categorical rudiments such as representable functors, (co)limits, functor (co)continuity and the like, covered in any number of sources: \cite{mcl_2e,bw,ahs}, etc.

\begin{definition}\label{def:anch}
  Let $Y\xrightarrow{\pi}X$ be a map and $Z$ a set. A \emph{$Z$-anchor} for $\pi$ is a map $Y\xrightarrow{f}Z$ injective on every fiber $\pi^{-1}(x)$, $x\in X$. We also refer to such pairs $(\pi,f)$ as \emph{$Z$-anchored maps $Y\to X$}. 

  The same terminology applies to topological spaces and continuous maps or, more generally, to any \emph{($\cat{Set}$-)concrete category} in the sense of \cite[Definition 5.1]{ahs}: a pair $(\cC,U)$ with  
  \begin{equation*}
    \text{category }\cC
    \xrightarrow[\quad\text{faithful functor}\quad]{\quad U\quad}
    \cat{Set}
    :=
    \text{category of sets}.
  \end{equation*}
  The one example of interest will be the category $\cat{Cpct}_{T_2}$ of compact Hausdorff spaces with its obvious forgetful functor to $\cat{Set}$. 
\end{definition}

\begin{theorem}\label{th:repr.anchors}
  For any compact Hausdorff space $Z$ and $n\in \bZ_{>0}$ the space $Z_{[n]}$ represents the functor
  \begin{equation}\label{eq:anch.cov.functor}
    \cat{Cpct}_{T_2}^{op}
    \ni
    X
    \xmapsto{\quad\cat{BC}^Z_{\le n}\quad}
    \left(\text{$Z$-anchored $(\le n)$-branched covers of $X$}\right)
    \in
    \cat{Set}.    
  \end{equation}
\end{theorem}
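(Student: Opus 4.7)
The plan is to exhibit mutually inverse natural bijections between $\Hom_{\cat{Cpct}_{T_2}}(X, Z_{[n]})$ and (isomorphism classes of elements of) $\cat{BC}^Z_{\le n}(X)$. The forward map sends a continuous $\iota \colon X \to Z_{[n]}$ to the pullback along $\iota$ of the tautological $Z$-anchored cover $\pi_Z \colon Z_{[n]}^{\subseteq} \to Z_{[n]}$ (whose anchor $Z_{[n]}^{\subseteq} \to Z$ is the first projection). Concretely this pullback identifies with
\begin{equation*}
Y_\iota := \bigl\{(x,z) \in X \times Z : z \in \iota(x)\bigr\} \subseteq X \times Z,
\end{equation*}
with projection $\pi_\iota$ to $X$ and anchor $f_\iota$ to $Z$ given by the two coordinate projections. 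Openness and surjectivity of $\pi_Z$ are preserved by pullback, the fiber-cardinality bound only shrinks, and fiber-injectivity of the anchor is evident, so this is indeed a $Z$-anchored $(\le n)$-branched cover of $X$.

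For the inverse, assign to $(\pi \colon Y \to X,\ f \colon Y \to Z)$ the set-valued map $\iota_{\pi,f}(x) := f(\pi^{-1}(x))$, which lands in $Z_{[n]}$ by the fiber-cardinality bound combined with fiber-injectivity of $f$. The main obstacle is Vietoris continuity of $\iota_{\pi,f}$, which amounts to unpacking the parenthetical in the proof of \Cref{le:nbrnch.univ}. Upper Vietoris continuity uses that $\pi$ is a closed map ($Y$ compact, $X$ Hausdorff): for any open $U \subseteq Z$ with $U \supseteq f(\pi^{-1}(x_0))$ the closed set $\pi(Y \setminus f^{-1}(U))$ misses $x_0$, whence its complement in $X$ is a neighborhood on which $\iota_{\pi,f}$ stays inside the basic upper-Vietoris open determined by $U$. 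Lower Vietoris continuity uses openness of $\pi$: given $y \in f^{-1}(U) \cap \pi^{-1}(x_0)$, the set $\pi(f^{-1}(U))$ is an open neighborhood of $x_0$ whose $\iota_{\pi,f}$-image meets $U$ at every point.

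That the two constructions are mutually inverse is then formal. In one direction, $\iota_{\pi_\iota, f_\iota}(x) = \{z \in Z : (x,z) \in Y_\iota\} = \iota(x)$ tautologically. In the other, given $(\pi,f)$, the continuous map $Y \to Y_{\iota_{\pi,f}}$ sending $y \mapsto (\pi(y), f(y))$ is a bijection (surjectivity from the definition of $\iota_{\pi,f}$, injectivity from fiber-injectivity of $f$), hence a homeomorphism because $Y$ is compact and the target Hausdorff; it intertwines the respective projections to $X$ and anchors to $Z$ by construction, so is an isomorphism of $Z$-anchored covers. Naturality in $X$ reduces to the pasting lemma for pullback squares on the forward side and to the evident behavior of $x \mapsto f(\pi^{-1}(x))$ under precomposition with a continuous $X' \to X$ on the inverse side.
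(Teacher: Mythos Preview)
Your proof is correct and follows essentially the same route as the paper's: both directions of the bijection are constructed identically (pullback of the tautological cover one way, $x\mapsto f(\pi^{-1}(x))$ the other), and the verification of mutual inverse is the same. Your write-up is in fact somewhat more explicit than the paper's, spelling out the upper/lower Vietoris continuity argument and flagging that one works with isomorphism classes of anchored covers.
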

\begin{proof}
  Observe first that there is a upper bound on the cardinalities of the domains of an $(\le n)$-branched cover of any given $X\in \cat{Cpct}_{T_2}$, so there are no set-theoretic issues: \Cref{eq:anch.cov.functor} does indeed take values in \emph{sets} as opposed to \emph{proper classes} \cite[\S I.6, p.23]{mcl_2e}. Furthermore, $\cat{BC}^Z_{\le n}$ operates on morphisms by pullback: pulling back an $(\le n)$-branched cover over any morphism in $\cat{Cpct}_{T_2}$ produces another such.

  We exhibit back-and-forth natural transformations between $\cat{BC}^Z_{\le n}$ and the representable functor $\cat{Cpct}_{T_2}\left(-,Z_{[n]}\right)$.

  \begin{enumerate}[(I),wide]
  \item\label{item:th:repr.anchors:pf.bc2rep} \textbf{: Branched covers $\to$ morphisms into $Z_{[n]}$.} Suppose
    \begin{equation}\label{eq:init.anch}
      \begin{tikzpicture}[>=stealth,auto,baseline=(current  bounding  box.center)]
        \path[anchor=base] 
        (0,0) node (l) {$X$}
        +(2,.5) node (u) {$Y$}
        +(4,0) node (r) {$Z$}
        ;
        \draw[<<-] (l) to[bend left=6] node[pos=.5,auto] {$\scriptstyle \pi$} (u);
        \draw[->] (u) to[bend left=6] node[pos=.5,auto] {$\scriptstyle f$} (r);
      \end{tikzpicture}
    \end{equation}
    is a $Z$-anchored $(\le n)$-branched cover. Once more, as in the proof of \Cref{le:nbrnch.univ}, the openness and continuity of $\pi$ ensure the Vietoris-continuity of
    \begin{equation*}
      X
      \ni
      x
      \xmapsto{\quad}
      \pi^{-1}(x)
      \in
      Y_{[n]};
    \end{equation*}
    further composed with the map $Y_{[n]}\xrightarrow{f_{[n]}}Z_{[n]}$ induced by $f$, this yields the desired $\cat{Cpct}_{T_2}$-morphism $X\to Z_{[n]}$. 
    
  \item\label{item:th:repr.anchors:pf.rep2bc} \textbf{: Morphisms into $Z_{[n]}$ $\to$ branched covers.} Fit a morphism $X\xrightarrow{g}Z_{[n]}$ into the lower left-hand pullback
    \begin{equation}\label{eq:y.as.plbk}
      \begin{tikzpicture}[>=stealth,auto,baseline=(current  bounding  box.center)]
        \path[anchor=base] 
        (0,0) node (l) {$Y$}
        +(2,.5) node (u) {$Z_{[n]}^{\subseteq}$}
        +(2,-.5) node (d) {$X$}
        +(4,0) node (r) {$Z_{[n]}$}
        +(6,0) node (rr) {$Z$,}
        ;
        \draw[->] (l) to[bend left=6] node[pos=.5,auto] {$\scriptstyle $} (u);
        \draw[->] (u) to[bend left=6] node[pos=.5,auto,swap] {$\scriptstyle \pi_Z$} (r);
        \draw[->] (l) to[bend right=6] node[pos=.5,auto,swap] {$\scriptstyle \pi$} (d);
        \draw[->] (d) to[bend right=6] node[pos=.5,auto,swap] {$\scriptstyle g$} (r);
        \draw[->] (u) to[bend left=20] node[pos=.5,auto,sloped] {$\scriptstyle \text{first projection}$} (rr);
        \draw[->] (l) to[bend left=60] node[pos=.5,auto,sloped] {$\scriptstyle f$} (rr);
      \end{tikzpicture}
    \end{equation}
    with the anchor $f$ defined as the pictured composition. 
    
  \end{enumerate}
  The verification that the two natural transformations \Cref{item:th:repr.anchors:pf.bc2rep} and \Cref{item:th:repr.anchors:pf.rep2bc} compose to identities in both directions is not difficult. On the one hand
  \begin{equation*}
    \text{\Cref{item:th:repr.anchors:pf.bc2rep}}\circ\text{\Cref{item:th:repr.anchors:pf.rep2bc}} = \id_{\cat{Cpct}_{T_2}(X,Z_{[n]})}
  \end{equation*}
  virtually tautologically: in \Cref{eq:y.as.plbk}, $g=f_{[n]}\circ \pi^{-1}$ by direct examination. Conversely, given $Z$-anchored branched cover \Cref{eq:init.anch}, the canonical map from $Y$ to the pullback in the lower left of \Cref{eq:y.as.plbk} is on the one hand injective by the anchoring requirement (an element $y\in Y$ is uniquely defined by $(\pi(y),f(y))$), and on the other hand onto by the surjectivity of $\pi$. 
\end{proof}

%% REPRESENTABLE FUNCTOR: END

In other words, \Cref{th:repr.anchors} says that
\begin{equation*}
  \begin{tikzpicture}[>=stealth,auto,baseline=(current  bounding  box.center)]
    \path[anchor=base] 
    (0,0) node (l) {$Z_{[n]}$}
    +(2,.5) node (u) {$Z^{\subseteq}_{[n]}$}
    +(4,0) node (r) {$Z$}
    ;
    \draw[<<-] (l) to[bend left=6] node[pos=.5,auto] {$\scriptstyle \pi_Z$} (u);
    \draw[->] (u) to[bend left=6] node[pos=.5,auto] {$\scriptstyle \text{first projection}$} (r);
  \end{tikzpicture}
\end{equation*}

is the universal $Z$-anchored $(\le n)$-branched cover. It is this ``maximal misbehavior'' of the $\pi_Z$ that makes those maps good candidates for the counterexamples provided by \cite[Theorem A]{2409.17807v1} to \cite[Theorem 1.1, (1) $\Rightarrow$ (2)]{pt_brnch}. The same phenomenon manifests in the present context as Hilbert-renorming failure.

\Cref{pr:zn.not.hilb.renorm} and the subsequent discussion take for granted the correspondence (\cite[Propositions 1.2 and 1.3]{zbMATH04054348} or \cite[Scholium 6.7]{hk_shv-bdl} more broadly, beyond the $C^*$ framework) between continuous unital $C^*$ fields over compact Hausdorff $X$ continuous (in the appropriate sense) unital \emph{$C(X)$-algebras} \cite[Definition 1.5]{zbMATH04056334}: unital $C^*$ embeddings
\begin{equation*}
  C(X)
  \lhook\joinrel\xrightarrow{\quad}
  Z(A):=\text{center of the $C^*$-algebra }A.
\end{equation*}

\begin{proposition}\label{pr:zn.not.hilb.renorm}
  If the compact Hausdorff space $Z$ contains a copy of the \emph{one-point compactification} \cite[Theorem 3.5.11]{eng_top_1989} of an uncountable discrete space and $n\in \bZ_{\ge 3}$ then the $C^*$ bundle over $Z_{[n]}$ associated to the embedding
  \begin{equation*}
    C\left(Z_{[n]}\right)
    \lhook\joinrel\xrightarrow{\quad\pi_Z^*\quad}
    C\left(Z_{[n]}^{\subseteq}\right)
  \end{equation*}
  dual to $\pi_Z$ does not admit a Hilbert renorming. 
\end{proposition}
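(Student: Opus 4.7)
The plan is to derive a contradiction by exhibiting a pullback of $\pi_Z$ that, by the negative results of \cite{2409.17807v1}, admits no Hilbert renorming, and then observing that Hilbert renormings survive pullback of Banach bundles along continuous maps.

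First, unpack the translation: a Hilbert renorming of the $C^*$-bundle associated to $\pi_Z^*$ amounts to continuously varying Hilbert norms on the fibers $C(\pi_Z^{-1}(A))\cong \bC^{|A|}$, uniformly equivalent to the sup norms (or equivalently, via \cite[Proposition 5.4]{pt_brnch}, to a finite-index expectation $C(Z_{[n]}^{\subseteq})\to C(Z_{[n]})$). Such a structure is stable under pullback along any continuous $g\colon X'\to Z_{[n]}$: the fibers of the pullback Banach bundle $X'\times_{Z_{[n]}}Z_{[n]}^{\subseteq}\to X'$ over $x'$ are canonically the fibers of $\pi_Z$ over $g(x')$, so the fiber-wise Hilbert norms transport, continuity pulls back through $g$, and the same global Banach--Mazur bound is retained. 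In particular, if the bundle over $Z_{[n]}$ admitted a Hilbert renorming then so would every such pullback.

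Second, invoke \cite[Theorem A]{2409.17807v1}: under the hypothesis that $Z$ contains the one-point compactification $W\subseteq Z$ of an uncountable discrete set, that result supplies a compact Hausdorff space $X'$ and a $W$-anchored $(\le n)$-branched cover $Y'\to X'$ (with $n\ge 3$ being the threshold at which the construction bites) whose associated $C^*$-bundle admits no finite-index expectation, hence no Hilbert renorming. Composing the anchor $Y'\to W\hookrightarrow Z$ with the ambient embedding upgrades this to a $Z$-anchored cover, and by \Cref{th:repr.anchors} this anchoring amounts to a continuous classifying map $X'\to Z_{[n]}$ along which $Y'\to X'$ is exactly the pullback of $\pi_Z$. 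Combining with the preceding paragraph produces the desired contradiction.

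The main obstacle is a careful invocation of \cite[Theorem A]{2409.17807v1}: one has to read off from that result precisely which $(\le n)$-branched counterexample it manufactures, in particular verifying that it fits into the $Z$-anchored framework supplied by the embedded one-point compactification (and that $n\ge 3$ is the relevant branching threshold). The pullback stability of Hilbert renormings, by contrast, is a direct fiber-wise check once the pullback bundle has been identified.
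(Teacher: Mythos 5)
Your overall strategy is sound, but it is more elaborate than the paper's proof and hinges on a reading of \cite[Theorem A]{2409.17807v1} that the paper does not share. The paper's argument is a two-liner: \cite[Theorem A]{2409.17807v1}, under exactly the stated hypotheses (one-point compactification of an uncountable discrete space embedded in $Z$, and $n\ge 3$), asserts directly that there is no finite-index conditional expectation $C(Z_{[n]}^{\subseteq})\to C(Z_{[n]})$, and then \Cref{le:expect.renorm} (the paper's own proof of the equivalence between Hilbert renormings and finite-index expectations -- you instead cite \cite[Proposition 5.4]{pt_brnch}, which serves the same purpose) converts this to the absence of a Hilbert renorming. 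There is no auxiliary counterexample $Y'\to X'$ to manufacture and no pullback to perform: the theorem already targets $\pi_Z$ itself.

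Your pullback-stability observation for Hilbert renormings is correct (the pullback bundle has the same fibers, so the fiberwise Hilbert norms, their continuity, and the uniform Banach--Mazur bound all transport), and if \cite[Theorem A]{2409.17807v1} \emph{had} been stated for some other anchored cover, your use of \Cref{th:repr.anchors} to identify it as a pullback of $\pi_Z$ would be the right move. But since the cited theorem is tailored to $\pi_Z$ for $Z$ containing the relevant one-point compactification, that machinery is vacuous here: the classifying map is the identity and the pullback is $\pi_Z$ again. So the argument is valid but contains a detour that dissolves once the cited result is read as the paper reads it; what survives is exactly the paper's proof (Theorem A plus the renorming-expectation equivalence).
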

\begin{proof}
  As recalled in the introductory discussion, \cite[Theorem A]{2409.17807v1} argues that under the hypotheses there is no finite-index conditional expectation  $C\left(Z_{[n]}^{\subseteq}\right)\to C\left(Z_{[n]}\right)$. That the present statement is effectively a rephrasing of that remark follows from \Cref{le:expect.renorm}. 
\end{proof}

\begin{lemma}\label{le:expect.renorm}
  Let $X$ be a compact Hausdorff space and $C(X)\lhook\joinrel\xrightarrow{\iota}A$ a continuous unital $C(X)$-algebra.

  The associated $C^*$ bundle $\cA\xrightarrowdbl{} X$ admits a Hilbert renorming if and only if there is a finite-index expectation $A\xrightarrowdbl{E}C(X)$. 
\end{lemma}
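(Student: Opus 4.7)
For $(\Leftarrow)$, given a finite-index expectation $E:A\twoheadrightarrow C(X)$ with $KE(a^*a)\ge a^*a$, set
\begin{equation*}
  \vvvert a\vvvert_x^2 := E(a^*a)(x),\quad a\in A,\ x\in X.
\end{equation*}
Positivity of $E$ descends this to a Hilbert seminorm on each fiber $\cA_x$, and joint continuity of $E$ together with the $C^*$-bundle structure makes $\vvvert\cdot\vvvert_\bullet$ a continuous Banach-bundle norm. Contractivity of $E$ yields $\vvvert a\vvvert_x\le \|a\|_x$, while the finite-index inequality evaluated fiberwise yields $\|a(x)\|_x^2\le K\vvvert a\vvvert_x^2$. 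Thus $\vvvert\cdot\vvvert_\bullet$ is an equivalent Hilbert renorming with uniform Banach-Mazur bound $\sqrt K$, in parallel to \cite[Proposition 5.4]{pt_brnch}.

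For $(\Rightarrow)$, start with a Hilbert renorming and its fiber inner products $\langle\cdot,\cdot\rangle_x$. The naive candidate $E(a)(x):=\langle a(x), 1_x\rangle_x/\langle 1_x, 1_x\rangle_x$---orthogonal projection onto $C(X)\cdot 1\subseteq A$---is $C(X)$-linear, unital, and continuous in $x$ (continuity of $\langle\cdot,\cdot\rangle_\bullet$ follows from that of the Hilbert-bundle norm by polarization), but need not be positive. To enforce positivity, I would replace each fiber inner product by its left-unitary-invariant Haar average
\begin{equation*}
  \langle a, b\rangle_x^{\mathrm{sym}} := \int_{U(\cA_x)}\langle ua, ub\rangle_x\,du,
\end{equation*}
which remains equivalent to $\|\cdot\|_x$ with Banach-Mazur degradation depending only on $\dim\cA_x$. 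Because unitaries span the finite-dimensional $C^*$-algebra $\cA_x$, any left-unitary-invariant inner product takes the GNS form $\langle a, b\rangle_x^{\mathrm{sym}} = \phi_x(a^*b)$ for $\phi_x(a):=\langle 1_x, a\rangle_x^{\mathrm{sym}}$, automatically positive since $\phi_x(a^*a) = \vvvert a\vvvert_x^{\mathrm{sym},2}\ge 0$. Then $E(a)(x):=\phi_x(a(x))/\phi_x(1_x)$ is unital, positive, and $C(X)$-linear---hence a conditional expectation by Tomiyama \cite[Theorem II.6.10.1]{blk}---and the uniform Banach-Mazur bound on $\vvvert\cdot\vvvert_\bullet^{\mathrm{sym}}$ supplies the finite-index constant.

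The principal obstacle is ensuring that $\langle\cdot,\cdot\rangle_\bullet^{\mathrm{sym}}$ (equivalently, $\phi_\bullet$) depends continuously on $x\in X$, since the compact unitary groups $U(\cA_x)$ and their Haar measures vary with $x$, possibly with jumps in fiber dimension. Continuity is handled using the continuity of multiplication and involution on the total $C^*$-bundle together with parameter-dependent continuity results for Haar integrals on varying compact Lie groups; once secured, the remaining verifications that $E$ is bounded and satisfies $KE(a^*a)\ge a^*a$ uniformly follow immediately from the preserved Banach-Mazur constant.
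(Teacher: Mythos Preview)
Your $(\Leftarrow)$ argument coincides with the paper's.

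For $(\Rightarrow)$ the paper is content with the very candidate you dismiss: it simply sets $E(a):=\braket{1\mid a}/\braket{1\mid 1}$ and asserts that the finite-index bound follows from the Banach--Mazur equivalence, without addressing positivity separately. Your concern that this $E$ need not be positive is well-founded (over a point with $A=M_2$, perturb the Hilbert--Schmidt inner product off-diagonally and $a\mapsto\braket{1\mid a}$ fails even to send self-adjoints to reals), so the paper's argument is at best a sketch here, presumably leaning on \cite[Proposition~5.4]{pt_brnch} for the honest work.

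Your unitary-averaging repair, however, has a gap you understate. The phrase ``parameter-dependent continuity results for Haar integrals on varying compact Lie groups'' carries all the weight, and there is no such result in the generality you need: the groups $U(\cA_x)$ do not form a fiber bundle over $X$, and both their dimension and their isomorphism type jump across the dimension strata of $\cA$ (e.g.\ $U(2)$ collapsing to $U(1)\times U(1)$ or to $U(1)$). Even granting that $\langle\cdot,\cdot\rangle_x^{\mathrm{sym}}$ varies continuously over each open stratum where the fiber type is locally constant, the limiting behavior at stratum boundaries is exactly the difficulty the paper's \Cref{th:subhom.lsc.unitariz} is built to handle, and it does not follow from anything you have written. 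Without it you have fiberwise states $\phi_x$ but no guarantee that $x\mapsto\phi_x(a(x))$ lands in $C(X)$, hence no map $A\to C(X)$ at all. You have traded a positivity gap for a continuity gap that is at least as serious.
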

\begin{proof}
  The implication $(\Leftarrow)$ is (a very small) part of \cite[Theorem 1]{fk_fin-ind}: given the expectation $E$ inducing states $E_x$ on the fibers $\cA_x$, $x\in X$, the family of Hilbert norms
  \begin{equation*}
    \vvvert a\vvvert_x
    :=
    E_x(a^*a)^{1/2}
    ,\quad
    a\in \cA_x
  \end{equation*}
  will do. Conversely, a Hilbert renorming will make $A=\Gamma(\cA)$ into a \emph{Hilbert module} \cite[p.4]{lnc_hilb} over $C(X)$ for an inner product
  \begin{equation}\label{eq:glob.inn}
    A\times A
    \xrightarrow{\quad\braket{-\mid -}\quad}
    C(X)
  \end{equation}
  (cf. \cite[Theorem 2.11]{zbMATH03738314} for the correspondence between Hilbert bundles and modules). The desired expectation will be
  \begin{equation}\label{eq:e.from.renorm}
    E(a)
    :=
    \frac{\braket{1\mid a}}{\braket{1\mid 1}}    
    ,\quad
    a\in A,
  \end{equation}
  which does make sense: the global inner product \Cref{eq:glob.inn} is by assumption fiber-wise non-degenerate, so the denominator is a strictly positive element of $C(X)$. The finite-index requirement follows easily from the assumed Banach-Mazur boundedness \Cref{eq:renorm.mv}. 
\end{proof}

% % \begin{lemma}\label{le:inv.11}
% %   Let $X$ be a compact Hausdorff space and $\braket{-\mid-}$ a self-Hilbert module structure on $C(X)$ whose underlying norm
% %   \begin{equation*}
% %     \vvvert a\vvvert
% %     :=
% %     \left\|\braket{a\mid a}\right\|^{1/2}
% %   \end{equation*}
% %   is equivalent to the original supremum norm $\|\cdot\|$ on $C(X)$. The positive element $\braket{1\mid 1}\in C(X)$ is then invertible. 
% % \end{lemma}
% % \begin{proof}
% % 
% %   \mgnt{do}
% % \end{proof}

%%%%%%%%%%%%%%%%%%%%%%%%%%%%%%%%%%%%%%%%%%%%%%%%%%%%%%%%%%%%%%%%%%%%%%%%%%%%%
\subsection{Asides on the literature}\label{subse:asides.xn.magerl}

The current subsection retraces some of the material of \cite{zbMATH03569401}, which (following \cite{MR450972}) leverages the spaces $X_{[n]}$ to provide counterexamples to Michael selection \cite[Theorem 3.2'']{mich_contsel-1} in the absence of metrizability. The goal is to generalize some of the auxiliary results slightly, as well as to correct what appear to me to be a few (small) errors: see \hyperref[res:magerl.no.lin.fn]{Remarks~\ref*{res:magerl.no.lin.fn}}. 

Recall \cite[preceding \S 3]{zbMATH03569401} that for a subset $X\subseteq E$ of a (real or complex) vector space a \emph{convex selection}
\begin{equation*}
  X_{[n]}
  \xrightarrow{\quad\phi\quad}
  E
\end{equation*}
is a map with
\begin{equation*}
  \phi(A)
  \in
  \mathrm{co}(A)
  :=
  \text{\emph{convex hull} of $A$ \cite[\S 7, p.57]{trev_tvs}}
  ,\quad
  \forall A\in X_{[n]}. 
\end{equation*}
When $E$ is a (again, usually Hausdorff) topological vector space continuity for such selections is always understood in terms of the Vietoris topology on the domain.

The intent behind \cite[Lemma 3]{zbMATH03569401} is to ``slice'' a convex selection on $X_{[n]}$ into ``lower'' convex selections, preserving appropriate continuity conditions. We revisit the result, removing the original constraint that $n=2,3$.

\begin{lemma}\label{le:slice.conv.sel}
  Let $n\ge 2$ be a positive integer and
  \begin{equation}\label{eq:phi.sel}
    X_{[n]}
    \xrightarrow{\quad\phi\quad}
    E
    ,\quad
    X\subseteq\text{topological vector space }E
  \end{equation}
  a convex selection continuous at $\left\{x,x_0\right\}\in X_{[n]}$.

  There are
  \begin{equation*}
    X_{[n-1]}
    \xrightarrow{\quad f^x\quad}
    [0,1]
    \quad\text{and}\quad
    X_{[n-1]}
    \xrightarrow[\text{convex selection}]{\quad\phi^x\quad}
    E
  \end{equation*}
  such that
  \begin{equation}\label{eq:slice.phi}
    \phi(A\cup \{x\})
    =
    f^x(A)x  
    +
    (1-f^x(A))\phi^x(A)
    ,\quad
    \forall A\in X_{[n-1]}.
  \end{equation}
  \begin{enumerate}[(1),wide]
  \item\label{item:le:slice.conv.sel:phicont} $\phi^x$ is continuous at $\left\{x_0\right\}$.
    
  \item\label{item:le:slice.conv.sel:fcont} If $x\ne x_0$ then $f^x$ is continuous at $\left\{x_0\right\}$.    
  \end{enumerate}
\end{lemma}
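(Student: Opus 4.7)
The plan is to define $f^x(A)$ as the largest possible weight of $x$ in a convex-combination decomposition of $\phi(A\cup\{x\})$ over $\{x\}\cup\mathrm{co}(A)$. Explicitly, I would set
\[
  f^x(A) := \max\bigl\{\lambda \in [0,1] \,:\, \phi(A\cup\{x\}) = \lambda x + (1-\lambda) y \text{ for some } y \in \mathrm{co}(A)\bigr\};
\]
the set on the right is nonempty (any convex-combination expression of $\phi(A\cup\{x\})$ over $A\cup\{x\}$ gives a candidate $\lambda$) and a closed sub-interval of $[0,1]$ (closedness from compactness of the polytope $\mathrm{co}(A)$; convexity by direct averaging of two witnesses). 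I would then set
\[
  \phi^x(A) := \frac{\phi(A\cup\{x\}) - f^x(A)\,x}{1-f^x(A)} \in \mathrm{co}(A)
\]
when $f^x(A) < 1$, and let $\phi^x(A)$ be a preassigned point of $\mathrm{co}(A)$ when $f^x(A) = 1$; in the latter case $\phi(A\cup\{x\})$ must equal $x$ and \Cref{eq:slice.phi} holds trivially. By construction $\phi^x$ takes values in $\mathrm{co}(A)$ (hence is a convex selection) and \Cref{eq:slice.phi} is satisfied.

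The substantive work lies in the continuity claims. For \Cref{item:le:slice.conv.sel:fcont}, suppose $x\ne x_0$. Affine independence of $\{x,x_0\}$ forces uniqueness of the decomposition $\phi(\{x,x_0\}) = \beta x + (1-\beta)x_0$, so $f^x(\{x_0\}) = \beta$. Along any net $A_\alpha \to \{x_0\}$ in the Vietoris topology, each element of $A_\alpha$ is eventually in a prescribed neighborhood of $x_0$, and local convexity of $E$ ensures that $\mathrm{co}(A_\alpha)$ itself shrinks to $\{x_0\}$. Selecting a Hahn--Banach functional $\ell \in E^*$ with $\ell(x-x_0)\ne 0$ and witnesses $y_\alpha \in \mathrm{co}(A_\alpha)$ realizing the maximum in $f^x(A_\alpha)$, I would rearrange to
\[
  f^x(A_\alpha)(x-y_\alpha) = \phi(A_\alpha\cup\{x\}) - y_\alpha,
\]
apply $\ell$, and divide through by $\ell(x-y_\alpha) \to \ell(x-x_0) \ne 0$: continuity of $\phi$ at $\{x,x_0\}$ combined with $y_\alpha \to x_0$ then yields $f^x(A_\alpha) \to \beta$.

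Item \Cref{item:le:slice.conv.sel:phicont} splits on whether $f^x(\{x_0\})<1$ or $=1$. In the first case, the explicit formula for $\phi^x$, together with continuity of $\phi$ at $\{x,x_0\}$ and (by the preceding paragraph) of $f^x$ at $\{x_0\}$, gives $\phi^x(A_\alpha)\to x_0$ immediately. Otherwise $f^x(\{x_0\})=1$, so $\phi^x(\{x_0\}) \in \mathrm{co}(\{x_0\}) = \{x_0\}$; since $\phi^x(A_\alpha) \in \mathrm{co}(A_\alpha)$ unconditionally, local convexity again gives $\phi^x(A_\alpha)\to x_0$. The case $x=x_0$ (excluded from \Cref{item:le:slice.conv.sel:fcont}) falls into this second branch. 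The chief obstacle throughout will be continuity of the maximum $f^x$ despite its dependence on the variable set $\mathrm{co}(A)$; local convexity of $E$, used to control $\mathrm{co}(A_\alpha)$ from $A_\alpha$, is what makes the Hahn--Banach reduction function.
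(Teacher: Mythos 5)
Your construction of $f^x$ and $\phi^x$ (taking the maximal weight of $x$; the maximality is harmless but unnecessary, since any decomposition over $\{x\}\cup\mathrm{co}(A)$ works) and your treatment of part \Cref{item:le:slice.conv.sel:phicont} are fine in outline, but your proof of \Cref{item:le:slice.conv.sel:fcont} has a genuine gap: it invokes local convexity of $E$ and a continuous linear functional $\ell$ with $\ell(x-x_0)\ne 0$. Neither is among the hypotheses --- $E$ is an arbitrary (Hausdorff) topological vector space, and such spaces can have \emph{no} non-zero continuous functionals at all (e.g. $L^p$ for $0<p<1$). This is precisely the defect in M\"agerl's original argument that \Cref{res:magerl.no.lin.fn}\Cref{item:res:magerl.no.lin.fn:gaps} singles out, so reproducing it here defeats one of the purposes of restating the lemma. (Separately, local convexity is also not needed to see that $\mathrm{co}(A_\alpha)\to\{x_0\}$: because $|A_\alpha|\le n-1$ is bounded, every point of $\mathrm{co}(A_\alpha)$ is $\sum_{i}c_{i,\alpha}x_{i,\alpha}$ with coefficients in a fixed compact simplex and $x_{i,\alpha}\to x_0$, and a subnet argument plus joint continuity of the vector operations finishes the job.)

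The functional can be dispensed with as follows, which is how the paper proceeds. First establish \Cref{item:le:slice.conv.sel:phicont} directly (it is automatic for \emph{any} convex selection at a singleton, by the bounded-cardinality simplex argument just described), so $x_\alpha:=\phi^x(A_\alpha)\to x_0$. Then continuity of $\phi$ at $\{x,x_0\}$ together with \Cref{eq:slice.phi} gives
\begin{equation*}
  t_\alpha x+(1-t_\alpha)x_\alpha
  \xrightarrow[\quad\alpha\quad]{}
  t_0x+(1-t_0)x_0
  ,\qquad t_\alpha:=f^x(A_\alpha),\ t_0:=f^x(\{x_0\}),
\end{equation*}
and subtracting $x$ yields $(1-t_\alpha)(x_\alpha-x)\to(1-t_0)(x_0-x)$. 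Since $x_\alpha-x\to x_0-x\ne 0$ and $[0,1]$ is compact, any cluster point $s$ of $(1-t_\alpha)$ satisfies $s(x_0-x)=(1-t_0)(x_0-x)$, forcing $s=1-t_0$; hence $t_\alpha\to t_0$ with no separation functional needed. Note also that this order of proof (part \Cref{item:le:slice.conv.sel:phicont} first, then \Cref{item:le:slice.conv.sel:fcont}) is the reverse of yours; your version proves the continuity of $f^x$ first and feeds it into the first branch of part \Cref{item:le:slice.conv.sel:phicont}, which is consistent but inherits the Hahn--Banach dependence there as well.
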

\begin{proof}
  Existence is plain enough: $\phi(A\cup \{x\})$ is by assumption a convex combination of elements of $A\cup \{x\}$, and hence a convex combination of $x$ and some $\phi^x(A)\in \mathrm{co}(A)$. We henceforth focus on the continuity claims.

  \begin{enumerate}[(1),wide]

  \item Convex selections \Cref{eq:phi.sel} generally, for Hausdorff topological vector spaces $E$, are automatically continuous at singletons (this applies to $\phi$ and $\phi^x$ alike). Indeed, given convergent nets \cite[\S 1.6]{eng_top_1989}
    \begin{equation*}
      x_{i,\lambda}
      \xrightarrow[\quad\lambda\quad]{}
      x_0
      ,\quad
      1\le i\le n
    \end{equation*}
    and
    \begin{equation*}
      \left(c_{i,\lambda}\right)_{i=1}^n\in \Delta^{n-1}
      :=
      \text{$(n-1)$-simplex }
      \left\{(t_i)_{i=1}^n\in [0,1]^n\ :\ \sum t_i=1\right\}
    \end{equation*}
    the compactness of $\Delta^{n-1}$ allows us \cite[Theorem 3.1.23]{eng_top_1989} to assume convergence
    \begin{equation*}
      \left(c_{i,\lambda}\right)_{i=1}^n
      \xrightarrow[\quad\lambda\quad]{}
      \left(c_{i}\right)_{i=1}^n
      \in \Delta^{[n-1]},      
    \end{equation*}
    whence
    \begin{equation*}
      \sum_{i=1}^n c_{i,\lambda}x_{i,\lambda}
      \xrightarrow[\quad\lambda\quad]{}
      \sum_{i=1}^n c_{i}x_{0}
      =
      x_0.
    \end{equation*}
    
  \item Suppose
    \begin{equation*}
      A_{\lambda}
      \xrightarrow[\quad\lambda\quad]{\quad\text{Vietoris-convergent}\quad}
      \left\{x_0\right\}
      \in
      X_{[n-1]}
      \quad
      \xRightarrow{\quad\text{part \Cref{item:le:slice.conv.sel:phicont}}\quad}
      \quad
      \phi^x(A_{\lambda})
      \xrightarrow[\quad\lambda\quad]{}
      x_0.
    \end{equation*}
    By \Cref{eq:slice.phi} and the assumed continuity of $\phi$ at $\{x,x_0\}$ we have
    \begin{equation*}
      t_{\lambda}x+(1-t_{\lambda})x_{\lambda}
      \xrightarrow[\quad\lambda\quad]{}
      t_0x+(1-t_0)x_0
      \quad\text{for}\quad
      \begin{aligned}
        x_{\lambda}&:=\phi^x(A_{\lambda})\\
        t_{\lambda}&:=f^x(A_{\lambda})\\
        t_0&:=f^x(\{x_0\})
      \end{aligned}            
    \end{equation*}    
    Subtracting $x$ from both sides produces
    \begin{equation*}
      (1-t_{\lambda})(x_{\lambda}-x)
      \xrightarrow[\quad\lambda\quad]{}
      (1-t_{0})(x_{0}-x),
    \end{equation*}
    whence the conclusion: we are assuming $x\ne x_0$ and have already observed that $x_{\lambda}\xrightarrow[\lambda]{}x_0$, so the coefficients must converge as expected as well: $t_{\lambda}\xrightarrow[\lambda]{}t_0$. 
  \end{enumerate}
\end{proof}

\begin{remarks}\label{res:magerl.no.lin.fn}
  \begin{enumerate}[(1),wide]
  \item\label{item:res:magerl.no.lin.fn:gaps} As originally stated and proved, \cite[Lemma 3]{zbMATH03569401} appears to me to contain two gaps:

    \begin{itemize}[wide]
    \item The statement assumes the continuity of $\phi$ at $\{x_0\}$, but the \emph{proof} implicitly assumes it at $\{x,x_0\}$ instead. It is not difficult to produce examples disproving the claim in its original form: take $n:=2$, set $\phi(\left\{x',x_0\right\}):=x_0$ for $x'$ sufficiently close to $x_0$, and define it arbitrarily otherwise. There is no reason, then, why the coefficients in \Cref{eq:slice.phi} would be continuous at $x_0$ for $x$ sufficiently far from $x_0$.

    \item The result is stated for arbitrary (Hausdorff) topological vector spaces $E$, and the proof begins by selecting a continuous linear functional $E\to \bR$ non-vanishing on a certain non-zero element. This assumes such continuous linear functionals exist, which they don't always: the topological vector spaces $L^p$, $0<p<1$ of \cite[\S 15.9(8)]{k_tvs-1} admit no non-zero continuous functionals at all \cite[\S 15.9(9)]{k_tvs-1}. For the proof to go through as-is, one must assume the existence of enough continuous linear functionals on $E$ (\emph{local convexity} suffices for instance, by \emph{Hahn-Banach} \cite[p.187, Corollary 2]{trev_tvs}). 
    \end{itemize}

  \item\label{item:res:magerl.no.lin.fn:autocont} Per the proof of \Cref{le:slice.conv.sel}\Cref{item:le:slice.conv.sel:phicont} above, no additional assumptions (such as the requirement of \cite[Lemma 3]{zbMATH03569401} that $\phi\left(\left\{x,x_0\right\}\right)\not\in \{x,x_0\}$) are needed in order to prove continuity at singletons: it is automatic for all convex selections.

    In reference to item \Cref{item:res:magerl.no.lin.fn:gaps} above, it would (for the same reason) not be necessary to \emph{assume} continuity of the original $\phi$ at $\{x_0\}$.
  \end{enumerate}
\end{remarks}

% % OLD: NO LONGER NEEDED
% % 
% % The following simple observation will obviate the difficulty pointed out in \Cref{res:magerl.no.lin.fn}, confirming that \Cref{le:slice.conv.sel} (and hence also \cite[Lemma 3]{zbMATH03569401}) does hold regardless of local convexity. 
% % 
% % \begin{lemma}\label{le:net.cvx}
% %   Let
% %   \begin{equation*}
% %     (x_{\lambda})_{\lambda}
% %     ,\
% %     (y_{\lambda})_{\lambda}
% %     \subset \text{topological vector space }E
% %     \quad\text{and}\quad
% %     (t_{\lambda})_{\lambda}
% %     \subset
% %     [0,1]
% %   \end{equation*}
% %   be three nets. If
% %   \begin{equation*}
% %     x_{\lambda}
% %     \xrightarrow[\quad\lambda\quad]{}
% %     x
% %     \ne
% %     y
% %     \xleftarrow[\quad\lambda\quad]{}
% %     y_{\lambda}
% %     \quad\text{and}\quad
% %     t_{\lambda}x_{\lambda}+(1-t_{\lambda})y_{\lambda}
% %     \xrightarrow[\quad\lambda\quad]{}
% %     z
% %   \end{equation*}
% %   then 
% % \end{lemma}
% % \begin{proof}
% %   
% %   \mgnt{do}
% % \end{proof}

%%%%%%%%%%%%%%%%%%%%%%%%%%%%%%%%%%%%%%%%%%%%%%%%%%%%%%%%%%%%%%%%%%%%%%%%%%%%%
%%%%%%%%%%%%%%%%%%%%%%%%%%%%%%%%%%%%%%%%%%%%%%%%%%%%%%%%%%%%%%%%%%%%%%%%%%%%%

\addcontentsline{toc}{section}{References}
%\bibliography{bib}{}
%\bibliographystyle{plain}

\def\polhk#1{\setbox0=\hbox{#1}{\ooalign{\hidewidth
  \lower1.5ex\hbox{`}\hidewidth\crcr\unhbox0}}}

\Addresses

\end{document}